\begin{document}

\theoremstyle{plain}

\newtheorem{thm}{Theorem}[section]

\newtheorem{lem}[thm]{Lemma}
\newtheorem{Problem B}[thm]{Problem B}

\newtheorem{thml}{Theorem}
\renewcommand*{\thethml}{\Alph{thml}}   
\newtheorem{corl}{Corollary}
\renewcommand*{\thecorl}{C}

\newtheorem{pro}[thm]{Proposition}
\newtheorem{conj}[thm]{Conjecture}
\newtheorem{cor}[thm]{Corollary}
\newtheorem{que}[thm]{Question}
\newtheorem{rem}[thm]{Remark}
\newtheorem{defi}[thm]{Definition}

\newtheorem*{thmA}{Theorem A}
\newtheorem*{thmB}{Theorem B}
\newtheorem*{corC}{Corollary C}
\newtheorem*{thmC}{Theorem C}
\newtheorem*{thmD}{Theorem D}
\newtheorem*{thmE}{Theorem E}
 
\newtheorem*{thmAcl}{Theorem A$^{*}$}
\newtheorem*{thmBcl}{Theorem B$^{*}$}
\newcommand{\dd}{\mathrm{d}}

\newcommand{\Maxn}{\operatorname{Max_{\textbf{N}}}}
\newcommand{\Syl}{\operatorname{Syl}}
\newcommand{\Lin}{\operatorname{Lin}}
\newcommand{\U}{\mathbf{U}}
\newcommand{\R}{\mathbf{R}}
\newcommand{\dl}{\operatorname{dl}}
\newcommand{\Con}{\operatorname{Con}}
\newcommand{\cl}{\operatorname{cl}}
\newcommand{\Stab}{\operatorname{Stab}}
\newcommand{\Aut}{\operatorname{Aut}}
\newcommand{\Ker}{\operatorname{Ker}}
\newcommand{\InnDiag}{\operatorname{InnDiag}}
\newcommand{\fl}{\operatorname{fl}}
\newcommand{\Irr}{\operatorname{Irr}}
\newcommand{\FF}{\mathbb{F}}
\newcommand{\EE}{\mathbb{E}}
\newcommand{\normal}{\trianglelefteq}
\newcommand{\sn}{\normal\normal}
\newcommand{\Bl}{\mathrm{Bl}}
\newcommand{\NN}{\mathbb{N}}
\newcommand{\N}{\mathbf{N}}
\newcommand{\bfC}{\mathbf{C}}
\newcommand{\bfO}{\mathbf{O}}
\newcommand{\bfF}{\mathbf{F}}
\def\GGG{{\mathcal G}}
\def\HHH{{\mathcal H}}
\def\HH{{\mathcal H}}
\def\irra#1#2{{\rm Irr}_{#1}(#2)}

\renewcommand{\labelenumi}{\upshape (\roman{enumi})}

\newcommand{\PSL}{\operatorname{PSL}}
\newcommand{\PSU}{\operatorname{PSU}}
\newcommand{\alt}{\operatorname{Alt}}

\providecommand{\V}{\mathrm{V}}
\providecommand{\E}{\mathrm{E}}
\providecommand{\ir}{\mathrm{Irm_{rv}}}
\providecommand{\Irrr}{\mathrm{Irm_{rv}}}
\providecommand{\re}{\mathrm{Re}}

\numberwithin{equation}{section}
\def\irrp#1{{\rm Irr}_{p'}(#1)}

\def\ibrrp#1{{\rm IBr}_{\Bbb R, p'}(#1)}
\def\C{{\mathbb C}}
\def\Q{{\mathbb Q}}
\def\irr#1{{\rm Irr}(#1)}
\def\irrp#1{{\rm Irr}_{p^\prime}(#1)}
\def\irrq#1{{\rm Irr}_{q^\prime}(#1)}
\def \c#1{{\cal #1}}
\def \aut#1{{\rm Aut}(#1)}
\def\cent#1#2{{\bf C}_{#1}(#2)}
\def\norm#1#2{{\bf N}_{#1}(#2)}
\def\zent#1{{\bf Z}(#1)}
\def\syl#1#2{{\rm Syl}_#1(#2)}
\def\normal{\triangleleft\,}
\def\oh#1#2{{\bf O}_{#1}(#2)}
\def\Oh#1#2{{\bf O}^{#1}(#2)}
\def\det#1{{\rm det}(#1)}
\def\gal#1{{\rm Gal}(#1)}
\def\fit#1{{\bf F}(#1)}
\def\ker#1{{\rm ker}(#1)}
\def\normalm#1#2{{\bf N}_{#1}(#2)}
\def\alt#1{{\rm Alt}(#1)}
\def\iitem#1{\goodbreak\par\noindent{\bf #1}}
   \def \mod#1{\, {\rm mod} \, #1 \, }
\def\sbs{\subseteq}

\def\gc{{\bf GC}}
\def\ngc{{non-{\bf GC}}}
\def\ngcs{{non-{\bf GC}$^*$}}
\newcommand{\notd}{{\!\not{|}}}

\newcommand{\Z}{\mathbf{Z}}
\newcommand{\Out}{{\mathrm {Out}}}
\newcommand{\Mult}{{\mathrm {Mult}}}
\newcommand{\Inn}{{\mathrm {Inn}}}
\newcommand{\IBR}{{\mathrm {IBr}}}
\newcommand{\IBRL}{{\mathrm {IBr}}_{\ell}}
\newcommand{\IBRP}{{\mathrm {IBr}}_{p}}
\newcommand{\cd}{\mathrm{cd}}
\newcommand{\ord}{{\mathrm {ord}}}
\def\id{\mathop{\mathrm{ id}}\nolimits}
\renewcommand{\Im}{{\mathrm {Im}}}
\newcommand{\Ind}{{\mathrm {Ind}}}
\newcommand{\diag}{{\mathrm {diag}}}
\newcommand{\soc}{{\mathrm {soc}}}
\newcommand{\End}{{\mathrm {End}}}
\newcommand{\sol}{{\mathrm {sol}}}
\newcommand{\Hom}{{\mathrm {Hom}}}
\newcommand{\Mor}{{\mathrm {Mor}}}
\newcommand{\Mat}{{\mathrm {Mat}}}
\def\rank{\mathop{\mathrm{ rank}}\nolimits}
\newcommand{\Tr}{{\mathrm {Tr}}}
\newcommand{\tr}{{\mathrm {tr}}}
\newcommand{\Gal}{{\rm Gal}}
\newcommand{\Spec}{{\mathrm {Spec}}}
\newcommand{\ad}{{\mathrm {ad}}}
\newcommand{\Sym}{{\mathrm {Sym}}}
\newcommand{\Char}{{\mathrm {Char}}}
\newcommand{\pr}{{\mathrm {pr}}}
\newcommand{\rad}{{\mathrm {rad}}}
\newcommand{\abel}{{\mathrm {abel}}}
\newcommand{\PGL}{{\mathrm {PGL}}}
\newcommand{\PCSp}{{\mathrm {PCSp}}}
\newcommand{\PGU}{{\mathrm {PGU}}}
\newcommand{\codim}{{\mathrm {codim}}}
\newcommand{\ind}{{\mathrm {ind}}}
\newcommand{\Res}{{\mathrm {Res}}}
\newcommand{\Lie}{{\mathrm {Lie}}}
\newcommand{\Ext}{{\mathrm {Ext}}}
\newcommand{\Alt}{{\mathrm {Alt}}}
\newcommand{\AAA}{{\sf A}}
\newcommand{\SSS}{{\sf S}}
\newcommand{\DDD}{{\sf D}}
\newcommand{\QQQ}{{\sf Q}}
\newcommand{\CCC}{{\sf C}}
\newcommand{\SL}{{\mathrm {SL}}}
\newcommand{\Sp}{{\mathrm {Sp}}}
\newcommand{\PSp}{{\mathrm {PSp}}}
\newcommand{\SU}{{\mathrm {SU}}}
\newcommand{\GL}{{\mathrm {GL}}}
\newcommand{\GU}{{\mathrm {GU}}}
\newcommand{\Spin}{{\mathrm {Spin}}}
\newcommand{\CC}{{\mathbb C}}
\newcommand{\CB}{{\mathbf C}}
\newcommand{\RR}{{\mathbb R}}
\newcommand{\QQ}{{\mathbb Q}}
\newcommand{\ZZ}{{\mathbb Z}}
\newcommand{\bfN}{{\mathbf N}}
\newcommand{\bfZ}{{\mathbf Z}}
\newcommand{\PP}{{\mathbb P}}
\newcommand{\cG}{{\mathcal G}}
\newcommand{\cH}{{\mathcal H}}
\newcommand{\cQ}{{\mathcal Q}}
\newcommand{\GA}{{\mathfrak G}}
\newcommand{\cT}{{\mathcal T}}
\newcommand{\cL}{{\mathcal L}}
\newcommand{\IBr}{\mathrm{IBr}}
\newcommand{\cS}{{\mathcal S}}
\newcommand{\cR}{{\mathcal R}}
\newcommand{\GCD}{\GC^{*}}
\newcommand{\fA}{\mathfrak{A}}
\newcommand{\TCD}{\TC^{*}}
\newcommand{\FD}{F^{*}}
\newcommand{\GD}{G^{*}}
\newcommand{\HD}{H^{*}}
\newcommand{\GCF}{\GC^{F}}
\newcommand{\TCF}{\TC^{F}}
\newcommand{\PCF}{\PC^{F}}
\newcommand{\GCDF}{(\GC^{*})^{F^{*}}}
\newcommand{\RGTT}{R^{\GC}_{\TC}(\theta)}
\newcommand{\RGTA}{R^{\GC}_{\TC}(1)}
\newcommand{\Om}{\Omega}
\newcommand{\eps}{\epsilon}
\newcommand{\varep}{\varepsilon}
\newcommand{\al}{\alpha}
\newcommand{\chis}{\chi_{s}}
\newcommand{\sigmad}{\sigma^{*}}
\newcommand{\PA}{\boldsymbol{\alpha}}
\newcommand{\gam}{\gamma}
\newcommand{\lam}{\lambda}
\newcommand{\la}{\langle}
\newcommand{\genf}{F^*}
\newcommand{\ra}{\rangle}
\newcommand{\hs}{\hat{s}}
\newcommand{\htt}{\hat{t}}
\newcommand{\tG}{\hat G}
\newcommand{\St}{\mathsf {St}}
\newcommand{\bfs}{\boldsymbol{s}}
\newcommand{\bfl}{\boldsymbol{\lambda}}
\newcommand{\tn}{\hspace{0.5mm}^{t}\hspace*{-0.2mm}}
\newcommand{\ta}{\hspace{0.5mm}^{2}\hspace*{-0.2mm}}
\newcommand{\tb}{\hspace{0.5mm}^{3}\hspace*{-0.2mm}}
\def\skipa{\vspace{-1.5mm} & \vspace{-1.5mm} & \vspace{-1.5mm}\\}
\newcommand{\tw}[1]{{}^#1\!}
\renewcommand{\mod}{\bmod \,}

\marginparsep-0.5cm

\renewcommand{\thefootnote}{\fnsymbol{footnote}}
\footnotesep6.5pt
\title{Characters of prime power degree in principal blocks}

\author[]{J. Miquel Mart\'inez}
\address{Departament de Matem\`atiques, Universitat de Val\`encia, 46100 Burjassot, Val\`encia, Spain}
\email{josep.m.martinez@uv.es}

\thanks{This research is partially supported by Ministerio de Ciencia e Innovaci\'on PID2019-103854GB-I00, Generalitat Valenciana CIAICO/2021-163, as well as
a fellowship UV-INV-PREDOC20-1356056 from Universitat de Val\`encia and a travel grant associated to the same fellowship}

\keywords{Character degrees, principal block, prime powers}

\subjclass[2010]{20C20, 20C15}

\begin{abstract} 
 We describe finite groups whose principal block contains only characters of prime power degree.
 \end{abstract}

\maketitle
\section{Introduction}

Let $G$ be a finite group and let $\cd(G)$ denote the set of degrees of its irreducible ordinary characters. The properties of $G$ that can be seen in $\cd(G)$ have been a subject of study for over half a century. Some fundamental results in this topic are the Isaacs--Passman theorems, Thompson's theorem and the It\^o--Michler theorem, which was one of the first applications of the classification of finite simple groups to character theory. The recent survey \cite{Mor23} gives a good overview of the open problems and the techniques used to explore them.

 In \cite{Man85a} and \cite{Man85b}, O. Manz described the structure of finite groups all whose characters have prime power degrees. He proved that these groups are either solvable or isomorphic to a direct product $A\times S$ where $A$ is an abelian group and $S$ is isomorphic to $\SL_2(4)\cong \fA_5$ or $\SL_2(8)$ (although this last part depends on Brauer's height zero conjecture, which has been proved recently in \cite{MNST}). A version of Manz's results for Brauer characters was obtained in \cite{Ton14} and for real character degrees in \cite{Bon22} and \cite{CT22}.
 
Let $p$ be a prime and let $B_0(G)$ be the principal $p$-block of $G$. If every nonlinear character in $B_0(G)$ has degree divisible by $p$ then it is well known that $G$ has a normal $p$-complement \cite{IS} (this is in fact a principal block version of Thompson's theorem). If $\cd(B_0(G))$ consists only of powers of a different prime $q\neq p$ then it was proved in \cite[Theorem 5.3]{M21} that $G/\oh{p'}G$ has a normal abelian $q$-complement. In view of these results, A. Moret\'o asked the author if it is possible to describe finite groups $G$ such that $\cd(B_0(G))$ consists of powers of possibly different primes, providing a principal block version to Manz's results.
Of course, in this case $p$-solvability is no longer guaranteed (take $G=\fA_5$ and $p$ any prime dividing $|G|$), but it is possible to accurately describe the structure of $G$ and this description is the purpose of this note.

Since this property is inherited by factor groups and normal subgroups (see Lemma \ref{lem:normalsubgroups}), we inevitably run into the problem of determining which finite simple groups satisfy our hypothesis. The following completely describes these groups.

\begin{thml}\label{thm:simple}
Let $S$ be a nonabelian finite simple group, and let $p$ be a prime dividing $|S|$. Then $\cd(B_0(S))$ contains only prime powers if and only if $(S, p)$ is one of the following
\begin{enumerate}
\item $S=\PSL_2(q)$ for $q$ a Fermat or Mersenne prime and $p\not\in\{2, q\}$,
\item $S=\SL_2(2^{n})$ where $q=2^{n}\pm 1$ is a prime, $p\not\in \{2, q\}$,
\item $S=\SL_2(8)$ and $p\in\{2, 3, 5, 7\}$,
\item $S=\SL_2(4)\cong \fA_5$ and $p\in\{2,3,5\}$,
\item $S=\PSL_2(9)\cong \fA_6$ and $p=5$.
\end{enumerate}
and in all cases there are exactly two primes dividing the degrees in $\cd(B_0(S))$.
\end{thml}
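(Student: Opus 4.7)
The proof proceeds by the classification of finite simple groups, treating each family in turn. For each pair $(S, p)$ with $p$ dividing $|S|$, the aim is either to find a character in $B_0(S)$ whose degree is not a prime power, or else to verify that every non-linear character of $B_0(S)$ has prime-power degree and that exactly two primes occur among these degrees.

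The alternating and sporadic cases I would handle first. For $\fA_n$ with small $n$ I would inspect the character tables and block distributions directly, producing cases (iv) and (v) and excluding everything else. For $n$ sufficiently large, Nakayama's conjecture identifies $B_0(\fA_n)$ with the set of irreducible characters indexed by partitions of $n$ whose $p$-core coincides with that of $(n)$, and within this set the hook-length formula readily produces a partition whose degree has at least two distinct prime divisors. Sporadic groups and the Tits group are dispatched by direct inspection of the ATLAS together with its block distributions; none survive.

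The groups of Lie type are where the real work lies. Let $S$ be a simple quotient of $G^F$ with $G$ a simple algebraic group of defining characteristic $r$ over $\mathbb{F}_q$, and split into $p = r$ and $p \neq r$. In the defining case, Humphreys' theorem asserts that $B_0$ contains every character of positive $p$-defect, in particular every semisimple character $\chi_s$. For $S$ of rank at least two and $s$ a regular semisimple element of a maximally split torus, the degree $|G^F : C_{G^*}(s)^{F^*}|_{p'}$ factors as a product of at least two coprime cyclotomic polynomials evaluated at $q$, each larger than $1$, so it is not a prime power. In the non-defining case, the Cabanes--Enguehard description of $B_0$ lets me exhibit a unipotent character in $B_0$ whose generic degree is a product of two coprime cyclotomic factors, again excluding rank $\geq 2$. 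The rank-one families $\tw 2B_2(q)$ and $\tw 2G_2(q)$ are ruled out by direct inspection of their short lists of character degrees, which always contain a degree with more than one prime divisor regardless of $p$.

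This reduces the theorem to the analysis of $\PSL_2(q)$. Using Brauer's classical description of the blocks of $\SL_2(q)$, the principal $p$-block can be written down explicitly in each of the cases $p = r$, $p = 2 \neq r$, and $p$ odd with $p \mid q \pm 1$. The condition that every listed degree be a prime power then becomes an arithmetic condition on $q$, $q - 1$, and $q + 1$, whose solutions are exactly the configurations (i)--(iii) and (v) of the statement. The main technical obstacle I anticipate is the subtlety of the $p = 2$ block structure of $\PSL_2(q)$ with $q$ odd, where the halved degrees $(q \pm 1)/2$ must be tracked carefully; once that is done, the ``exactly two primes'' count follows by a short direct computation in each listed case.
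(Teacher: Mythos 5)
Your overall architecture (CFSG, family-by-family analysis, reduction to $\PSL_2(q)$) matches the paper's, but you are missing the two external inputs that make the paper's proof work, and several of the direct constructions you propose in their place have genuine gaps. The paper's engine is the combination of (a) the Malle--Zalesskii and Balog--Bessenrodt--Olsson--Ono classifications of \emph{all} irreducible characters of prime power degree of quasi-simple groups, and (b) the lower bound $|\cd(B_0(G))|\geq 3$ for non-$p$-solvable $G$, refined to at least three $p'$-degrees when $p\geq 5$ (Lemma \ref{lem:ppaldegrees}, from \cite{M21} and \cite{GRSS20}). Together these dispose of almost every family with no need to locate an explicit witness in $B_0$: if $S$ has at most one nonlinear character of prime power degree, any third degree in $\cd(B_0(S))$ is automatically composite. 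This is how the paper handles $\fA_n$ for $n\geq 10$, all exceptional groups, and large parts of the classical cases; your hook-length construction for alternating groups and your case-by-case unipotent analysis would have to be carried out from scratch and are only sketched.

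The concrete gaps are these. First, in defining characteristic your semisimple-character argument rests on the claim that a product of two coprime cyclotomic polynomials evaluated at $q$, each larger than $1$, cannot be a prime power; this is false as stated, since $\Phi_{d_1}(q)$ and $\Phi_{d_2}(q)$ need not be coprime integers (e.g.\ $\Phi_2(2)=\Phi_6(2)=3$), so you would need Zsigmondy's theorem together with its exceptional cases, which you do not address. The paper avoids this entirely: by \cite[Theorem 3.3]{Cab18}, in defining characteristic $\Irr(B_0(S))=\Irr(S)\setminus\{\St_S\}$, so the question reduces verbatim to Manz's classification \cite{Man85b} of nonsolvable groups all of whose character degrees are prime powers. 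Second, in non-defining characteristic your claim that one can always exhibit a unipotent character in $B_0$ of non-prime-power degree is unsubstantiated: which unipotent characters lie in $B_0$ depends on the order $e$ of $\pm q$ modulo $p$, and the paper itself cannot always rely on a unipotent witness --- for odd $p$ in the relevant type $A$ and type $C$ cases it invokes the existence of an even-degree character in $B_0$ from \cite[Theorem B]{GMV19}, and for $\PSL_3(q)$ and $\PSU_3(q)$ with $p\mid q+\eps$ it uses a non-unipotent character of degree $q^3-\eps$. Until you either prove your unipotent claim uniformly or substitute such tools, the non-defining case is not closed. Your $\PSL_2(q)$ endgame and the verification of the ``if'' direction via defect-zero considerations are fine and agree with the paper.
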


Most of the work towards the proof of Theorem \ref{thm:simple} follows from the results of \cite{MZ01} and \cite{BBOO01}, where the prime power degree characters of finite (quasi-)simple groups were determined. In fact, Theorem \ref{thm:simple} is fairly simple to obtain using these results and the work done in \cite{RSV20} and \cite{GRSS20} on simple groups of Lie type.

For general finite groups we have the following description.

\begin{thml}\label{thm:main}
Let $G$ be a finite group, $p$ a prime dividing $|G|$ and assume $\cd(B_0(G))$ consists only of prime powers. Then one of the following happens:
\begin{enumerate}
\item $G/\oh{p'}G$ is a solvable group described in \cite{Man85a},
\item there is a normal subgroup $\oh{p'}G\sbs M\normal G$ such that $$M/\oh{p'}G=H \times S$$ where $H$ is an abelian $p$-group and $(S, p)$ is one of the pairs from Theorem \ref{thm:simple}. Further, $G/M$ is isomorphic to a subgroup of $\Out(S)$.
\end{enumerate}
\end{thml}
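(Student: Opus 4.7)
My plan is to proceed by induction on $|G|$. The initial reduction uses the standard fact that $\oh{p'}G$ lies in the kernel of every character of $B_0(G)$: this gives $\cd(B_0(G)) = \cd(B_0(G/\oh{p'}G))$, and Lemma \ref{lem:normalsubgroups} then lets me replace $G$ by $G/\oh{p'}G$, so I may assume $\oh{p'}G = 1$. Under this reduction every minimal normal subgroup of $G$ is either an elementary abelian $p$-group or a direct product of isomorphic nonabelian simple groups, and the argument splits according to whether $G$ has a nonabelian minimal normal subgroup.

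Suppose first $G$ has such a subgroup $N = T_1 \times \cdots \times T_k$ with $T_i \cong T$ simple. By Lemma \ref{lem:normalsubgroups}, $\cd(B_0(N))$ consists of prime powers; since $B_0$ of a direct product factors as a tensor product, every element of $\cd(B_0(N))$ is a product of entries of $\cd(B_0(T))$. Theorem \ref{thm:simple} says the degrees in $\cd(B_0(T))$ involve exactly two distinct primes $q_1 \ne q_2$, so placing characters of nontrivial $q_1$- and $q_2$-power degrees in different tensor factors forces $k = 1$, whence $N = S$ is simple with $(S, p)$ a pair from Theorem \ref{thm:simple}. Set $C = \cent{G}{S}$: since $Z(S) = 1$, $SC = S \times C \normal G$ and $G/SC$ embeds into $\Out(S)$. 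To obtain case~(ii) it suffices to show $C$ is an abelian $p$-group, and the same two-primes device does this: for any $\psi \in \Irr(B_0(C))$ and $\theta \in \Irr(B_0(S))$, since $B_0(G)$ covers $B_0(S \times C)$, there is $\chi \in \Irr(B_0(G))$ with $\theta \times \psi$ a constituent of $\chi|_{S \times C}$, so $\theta(1)\psi(1) \mid \chi(1)$ is a prime power; running $\theta$ through characters of nontrivial $q_1$-power and $q_2$-power degrees forces $\psi(1) = 1$. Thus $\cd(B_0(C)) = \{1\}$, and the Isaacs--Smith theorem \cite{IS} supplies a normal $p$-complement in $C$, equal to $\oh{p'}C \le \oh{p'}G = 1$; so $C$ is a $p$-group with $B_0(C) = \Irr(C)$ and $\cd(C) = \{1\}$, hence an abelian $p$-group. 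Setting $M = S \times C$ gives case~(ii).

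Now suppose $G$ has no nonabelian minimal normal subgroup, so $V := \oh{p}G \ne 1$. Inflation sends $\Irr(B_0(G/V))$ into $\Irr(B_0(G))$, so $G/V$ also satisfies the hypothesis. If $G$ is solvable, case~(i) follows from Manz's description \cite{Man85a}; otherwise $G/V$ is nonsolvable and, by induction, falls into case~(ii), providing a normal simple subgroup $\bar S \normal G/V$ from Theorem \ref{thm:simple} with preimage $\hat S \normal G$ such that $\hat S/V \cong S$. The main obstacle is to rule out this configuration, and I expect to treat three subcases. (a) If $\hat S = V \times S$, then $S$ is the characteristic nonabelian simple direct factor of $\hat S$, hence normal in $G$, contradicting the absence of a nonabelian minimal normal subgroup. (b) If $V \le Z(\hat S)$ with the extension nontrivial, so $\hat S$ is a quasisimple $p$-cover of $S$, inspection of Theorem \ref{thm:simple} and the relevant Schur multipliers reduces to $\hat S = \SL_2(5)$ at $p = 2$, which is excluded by the direct computation that $\cd(B_0(\SL_2(5)))$ contains the non-prime-power degree $6$, contradicting Lemma \ref{lem:normalsubgroups}. (c) If $V \not\le Z(\hat S)$, then $S \cong \hat S/V$ acts nontrivially on $V$, and a Clifford-theoretic analysis using the inertia subgroups in $S$ of nontrivial characters of $V$, combined with the two-primes structure of $\cd(B_0(S))$, must produce a character of non-prime-power degree in $B_0(\hat S)$, contradicting Lemma \ref{lem:normalsubgroups}. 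Subcase~(c) is the most delicate and is where I expect the bulk of the technical work to lie; once it is cleared, $G$ is forced to be solvable and case~(i) follows from Manz's theorem.
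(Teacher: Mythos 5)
Your first branch --- where $G$ (after the reduction to $\oh{p'}G=1$, which is legitimate since the principal block of a $p'$-group is $\{1\}$ and $B_0(G)$ covers only $B_0(\oh{p'}G)$) has a nonabelian minimal normal subgroup --- is essentially correct and runs parallel to the paper's Theorem \ref{thm:nonpsolvable}: the tensor-product trick with two characters of nontrivial coprime prime-power degrees rules out more than one simple factor, and the covering argument forces $\cd(B_0(\cent GS))=\{1\}$, whence $\cent GS$ is an abelian $p$-group and $M=S\times\cent GS$ works. (The paper extracts the coprime degrees from Lemma \ref{lem:ppaldegrees} together with \cite[Lemma 5.2]{M21} rather than from the ``exactly two primes'' clause of Theorem \ref{thm:simple}, and it works with the layer $E(G)$ and its components rather than with a minimal normal subgroup, but in this branch the differences are cosmetic.)

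The second branch is where the proposal is genuinely incomplete. You yourself flag subcase (c) --- the preimage $\hat S$ with $\hat S/V\cong S$ acting nontrivially on $V=\oh pG$ --- as ``where the bulk of the technical work lies,'' and nothing is actually proved there: one would need a Clifford-theoretic argument covering every pair $(S,p)$ of Theorem \ref{thm:simple} and every possible module $V$. Moreover the setup is already shaky: the inductive hypothesis applied to $G/V$ only gives $S$ as a section above $\oh{p'}(G/V)$, which need not be trivial even when $\oh{p'}G=1$, so the identification ``$\hat S\normal G$ with $\hat S/V\cong S$'' does not come for free; and the trichotomy (a)/(b)/(c) does not cleanly cover mixed extensions (e.g.\ $V\le\zent{\hat S}$ with $\hat S$ not perfect). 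The paper sidesteps this entire branch by splitting on $p$-solvability rather than on the type of a minimal normal subgroup: if $G$ is $p$-solvable, Fong's theorem \cite[Theorem 10.20]{Nav98} gives $\Irr(B_0(G))=\Irr(G/\oh{p'}G)$ and Manz's classification applies directly to yield case (i); if not, it works with the generalized Fitting subgroup, whose components automatically centralize $\oh pG$, so the configuration of your subcase (c) never has to be excluded. As written, your argument establishes conclusion (ii) when a nonabelian minimal normal subgroup exists, but does not establish conclusion (i) in the remaining case.
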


We remark that the only case where $G/M$ is not cyclic is when $S\cong \fA_6$. Indeed, the characters in the principal $5$-block of $\Aut(\fA_6)$ have degrees $1, 9$ and $16$. In all other cases, $\Out(S)$ is cyclic.

The following is an immediate corollary of Theorem \ref{thm:main} and the main result of \cite{Man85a}.

\begin{corl}\label{cor:C}
Let $G$ be a finite group, and $p$ a prime such that $\cd(B_0(G))$ consists only of prime powers. Then there are at most $3$ primes dividing the degrees in $\cd(B_0(G))$.
\end{corl}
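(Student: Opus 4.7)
The approach is to invoke Theorem~\ref{thm:main} and bound the number of primes dividing $\cd(B_0(G))$ in each of its two cases separately.

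In case~(i) of Theorem~\ref{thm:main}, the group $G/\oh{p'}G$ is solvable and belongs to the class described in \cite{Man85a}, so in particular every irreducible character degree of $G/\oh{p'}G$ is a prime power. The main result of \cite{Man85a} then yields that at most two primes divide $\cd(G/\oh{p'}G)$. Since $\oh{p'}G$ lies in the kernel of every character in $B_0(G)$, we have $\cd(B_0(G))\subseteq \cd(G/\oh{p'}G)$, and the bound of three is immediate (in fact two suffices here).

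For case~(ii), write $N=\oh{p'}G$ and use $M/N=H\times S$. Since $\oh{p'}(H\times S)=1$ (as $H$ is a $p$-group and $p\mid |S|$), we get $\oh{p'}M=N$, so that $\cd(B_0(M))=\cd(B_0(M/N))=\cd(B_0(H\times S))=\cd(B_0(S))$, the last equality because $H$ is an abelian $p$-group so the characters of $B_0(H\times S)$ are of the form $\lambda\otimes\psi$ with $\lambda\in\Irr(H)$ linear and $\psi\in B_0(S)$. By Theorem~\ref{thm:simple}, exactly two primes divide this set. Now fix $\chi\in\Irr(B_0(G))$ and take an irreducible constituent $\theta$ of $\chi_M$; by standard block theory, $\theta\in B_0(M)$. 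Setting $T=I_G(\theta)$ and letting $\psi\in\Irr(T\mid\theta)$ be the Clifford correspondent of $\chi$, we have $\chi(1)=[G:T]\psi(1)$, and $\psi(1)/\theta(1)$ is the degree of an irreducible projective representation of $T/M$, hence divides $|T/M|$. Therefore $\chi(1)/\theta(1)$ divides $[G:M]$, which in turn divides $|\Out(S)|$. Since $\chi(1)$ is a prime power, every prime dividing $\chi(1)$ divides either $\theta(1)$ or $|\Out(S)|$.

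To conclude, I would check case by case that $|\Out(S)|$ is a prime power for each $(S,p)$ in Theorem~\ref{thm:simple}: $|\Out(\PSL_2(q))|=2$ for $q$ an odd prime, $|\Out(\SL_2(8))|=3$, $|\Out(\fA_5)|=2$, $|\Out(\fA_6)|=4$, and $|\Out(\SL_2(2^n))|=n$ is either a prime (when $2^n-1$ is prime) or a power of $2$ (when $2^n+1$ is a Fermat prime). Hence $|\Out(S)|$ contributes at most one prime beyond the two already appearing in $\cd(B_0(S))$, yielding the desired bound of three. The only step that requires more than bookkeeping is the Clifford-theoretic divisibility $\chi(1)\mid \theta(1)\cdot|\Out(S)|$; the rest follows immediately from Theorems~A, B and Manz's theorem, so no serious obstacle is expected.
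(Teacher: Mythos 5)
Your proposal is correct and follows essentially the same route as the paper: reduce to case (ii) of Theorem~\ref{thm:main}, observe that exactly two primes divide the degrees in $\cd(B_0(S))$, and note that $\Out(S)$ has prime power order in every case, so it contributes at most one further prime. The only (immaterial) difference is the Clifford-theoretic mechanism: you use the uniform divisibility $\chi(1)/\theta(1)\mid [G:M]$ via the Clifford correspondent, whereas the paper splits into the cases $\chi_S\neq 1_S$ (where a constituent of $\chi_S$ in $B_0(S)$ has degree a power of the same prime) and $\chi_S=1_S$ (where $\chi(1)$ divides $|G/M|$ by \cite[Theorem 5.12]{Nav18}).
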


Problems on character degrees of finite groups have led to the study of the so called character degree graph $\Gamma(G)$ whose vertices are primes dividing the degree of some character of $G$, and two vertices $p, q$ are connected if there is $\chi\in\Irr(G)$ with $pq\mid\chi(1)$. Shortly after Manz's work, it was proved that $\Gamma(G)$ has at most three connected components, and if $G$ is solvable then it has at most two (see \cite[Theorems 4.2 and 6.4]{Lewis}). In \cite{MS05} an analogous graph $\Gamma(B)$ was introduced for a $p$-block $B$, where the degrees considered are only those of characters that lie in $\Irr(B)$. In \cite[Corollary C]{MS05} the authors show that if $G$ is $p$-solvable then $\Gamma(B)$ has at most three connected components and if $G$ is solvable then it has two, so it seems that $\Gamma(B)$ behaves somewhat similarly to $\Gamma(G)$. As pointed out by Moret\'o, it is interesting to speculate whether $\Gamma(B_0(G))$ has at most $3$ connected components in general, mimicking the situation in $\Gamma(G)$.

We prove Theorem \ref{thm:simple} in Section \ref{sec:simple} and we prove Theorem \ref{thm:main} and Corollary \ref{cor:C} in Section \ref{sec:main}.

\subsection*{Acknowledgements}
The results in this note were obtained while the author visited the Department of Mathematics of the  Rheinland-Pf\"alzische Technische Universit\"at (formerly TU Kaiserslautern). He thanks Gunter Malle for supervising his visit and for a thorough read of this manuscript, and the entire department for their warm hospitality. Furthermore, he would like to thank Alexander Moret\'o for his question and very useful conversations on the topic, and Annika Bartelt for clarifying some formulas for unipotent characters.

\section{Simple groups}\label{sec:simple}

The aim of this section is to prove Theorem \ref{thm:simple}. We start by recalling some classical results in number theory.

\begin{lem}[Zsigmondy's theorem]\label{lem:zsig}
Let $q$ be a prime and $n>1$ an integer. Then
\begin{enumerate}
\item there is a prime dividing $q^n-1$ that does not divide $q^m-1$ for all $m<n$ unless $q=2$ and $n=6$ or $n=2$ and $q+1$ is a power of $2$,
\item there is a prime dividing $q^n+1$ that does not divide $q^m+1$ for all $m<n$ unless $q=2$ and $n=3$.
\end{enumerate}
\end{lem}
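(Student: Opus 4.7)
The plan is to deduce both parts from the theory of cyclotomic polynomials. Using the factorization $q^n - 1 = \prod_{d \mid n} \Phi_d(q)$, the existence of a primitive prime divisor of $q^n - 1$ in the sense of (i) is equivalent to the existence of a prime $p \mid \Phi_n(q)$ whose multiplicative order modulo $p$ equals $n$. A standard observation says that for any prime $p \mid \Phi_n(q)$, either $\operatorname{ord}_p(q) = n$ or $p \mid n$; moreover, in the latter case the $p$-adic valuation $v_p(\Phi_n(q))$ is bounded (by $1$ in most cases, with a controlled exception at $p=2$). Part (i) thus reduces to showing that $\Phi_n(q)$ is not entirely ``absorbed'' by prime factors of $n$ for $(q,n)$ outside the stated exceptional list.

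The main tool is the archimedean lower bound $\Phi_n(q) \geq (q-1)^{\varphi(n)}$, obtained by expanding $\Phi_n(q)$ as a product over primitive $n$-th roots of unity; this is strict for $n>1$. Combined with the $p$-adic bound above, the estimate forces a primitive prime divisor to exist except in a small finite list of pairs. The hard part will be the bookkeeping of these residual small cases, since the estimate is tight only when $\varphi(n)$ or $q-1$ is small. When $\varphi(n)=1$, i.e.\ $n=2$, one has $\Phi_2(q)=q+1$, and absence of a primitive prime divisor becomes exactly the condition that $q+1$ is a power of $2$. When $q=2$ the archimedean bound degenerates to $1$, and one must evaluate $\Phi_n(2)$ directly for small $n$, identifying $n=6$ as the unique failure because $\Phi_6(2)=3$ already divides $2^2-1$.

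For part (ii) I would use the identity $q^n+1=(q^{2n}-1)/(q^n-1)$: a prime dividing $q^n+1$ but no $q^m+1$ with $m<n$ is precisely a primitive prime divisor of $q^{2n}-1$, since its order modulo $p$ then divides $2n$ but not $n$, hence equals $2n$. Applying part (i) with exponent $2n$, the Mersenne-type exception (which requires exponent $2$) cannot occur since $n>1$ forces $2n\geq 4$, leaving only the case $2n=6$, i.e.\ the single excluded pair $q=2$, $n=3$. Throughout, the delicate point is confirming that the case analysis for $q=2$ and for $n\in\{2,3,4,6\}$ produces exactly the exceptions listed and no others, which is the step most prone to oversight.
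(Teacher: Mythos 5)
The paper does not prove this lemma at all: it is Zsigmondy's classical theorem, stated as a known result (the neighbouring Lemma~2.2 is likewise only cited, to Huppert--Blackburn). So there is no internal proof to compare against; your sketch should be judged on its own. It follows the standard cyclotomic-polynomial route, and its skeleton is sound: the dichotomy that a prime $p\mid\Phi_n(q)$ either has $\ord_p(q)=n$ or divides $n$ (and is then the largest prime factor of $n$, with $v_p(\Phi_n(q))\le 1$ except when $p=2$, $n=2$) is correct, and your reduction of part~(ii) to part~(i) via $q^n+1=(q^{2n}-1)/(q^n-1)$ is clean and complete --- the order argument showing that a prime dividing $q^n+1$ but no $q^m+1$ with $m<n$ has order exactly $2n$ is right, and $2n\ge 4$ does eliminate the $n=2$ exception, leaving only $2n=6$, $q=2$.

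The one genuine gap is quantitative. You correctly observe that the bound $\Phi_n(q)>(q-1)^{\varphi(n)}$ degenerates to $\Phi_n(2)>1$ when $q=2$, but you then say one ``must evaluate $\Phi_n(2)$ directly for small $n$'' without supplying the estimate that makes ``small'' a finite, explicit list. If no primitive prime divisor exists, the $p$-adic analysis gives $\Phi_n(q)\le p\le n$ (for $n>2$), so for $q\ge 3$ the inequality $2^{\varphi(n)}<n$ pins down the candidates; but for $q=2$ you need a genuine replacement lower bound, e.g.\ $\Phi_n(2)=\prod_{d\mid n}(2^d-1)^{\mu(n/d)}>2^{\varphi(n)}\prod_{d\ge1}(1-2^{-d})>2^{\varphi(n)-2}$, after which $2^{\varphi(n)-2}\le n$ holds only for an enumerable handful of $n$, among which only $n=6$ actually fails. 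Without some such estimate the $q=2$ case is not reduced to a finite verification, and that is precisely the case the paper needs (it applies the lemma to Fermat/Mersenne-type situations and to $q=2$). Supplying that one inequality would make the argument complete.
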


\begin{lem}\label{lem:hb}
Suppose that $q$ is an odd prime and $q^n+1=2^s$ for some positive integers $n$ and $s$. Then $n=1$.
\end{lem}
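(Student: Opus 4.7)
The plan is to rule out $n \geq 2$ by splitting into two cases according to the parity of $n$.

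First, I would handle the case $n$ even: writing $n = 2m$, since $q$ is odd, $q^m$ is odd, and the square of any odd integer is $\equiv 1 \pmod 4$. Therefore
\[
q^n + 1 = (q^m)^2 + 1 \equiv 2 \pmod 4,
\]
which forces $s = 1$ and $q^n = 1$, contradicting $q \geq 3$.

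Next, for $n$ odd with $n \geq 3$, I would pick an odd prime divisor $r$ of $n$, write $n = rm$, and use the standard factorization (valid because $r$ is odd)
\[
q^n + 1 = (q^m)^r + 1 = (q^m + 1) \cdot B, \qquad B = \sum_{k=0}^{r-1} (-q^m)^k.
\]
Then I would argue as follows: since $q^m$ is odd, each summand $(q^m)^k$ is odd, so $B$ is a sum of an odd number ($r$) of odd integers and is therefore odd. Moreover, $B = (q^n + 1)/(q^m + 1) > 1$ because $n > m$. Hence $B$ is an odd integer greater than $1$, which cannot divide $q^n + 1 = 2^s$, a contradiction.

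Combining both cases, $n = 1$ is the only possibility. The argument is short and elementary, so I do not anticipate a genuine obstacle; the only delicate step is the parity check on $B$, which relies precisely on $r$ being odd so that the alternating sum of $r$ odd terms is odd.
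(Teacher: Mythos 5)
Your argument is correct and complete, but it differs from the paper in that the paper gives no argument at all: Lemma \ref{lem:hb} is proved there simply by citing \cite[Chapter IX, Lemma 2.7]{HB82}. Your two-case analysis is a clean, self-contained replacement. The even case is right: for $n=2m$ with $q$ odd, $(q^m)^2+1\equiv 2\pmod 4$ forces $s=1$ and hence $q^n=1$, which is absurd. The odd case is also right: with $n=rm$ for an odd prime $r$, the cofactor $B=\sum_{k=0}^{r-1}(-q^m)^k$ is a sum of $r$ odd terms, hence odd, and $B=(q^n+1)/(q^m+1)>1$ since $n>m$ and $q\geq 3$; an odd divisor greater than $1$ of $2^s$ is impossible. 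The only thing worth making explicit is that $B$ is indeed an integer (it is, being the evaluation of the integer polynomial $\sum_{k}(-x)^k$ at $x=q^m$, or equivalently because $x+1$ divides $x^r+1$ for odd $r$ in $\ZZ[x]$). What your approach buys is independence from the reference; what the citation buys is brevity and, implicitly, the stronger classification results (this statement is a special case of Catalan/Mih\u{a}ilescu-type facts that \cite{HB82} records in elementary form).
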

\begin{proof}
See \cite[Chapter IX, Lemma 2.7]{HB82}.
\end{proof}

The following immediately follows from the previous lemmas.

\begin{lem}\label{lem:48}
Assume $q$ is a power of $2$ such that $q-1$ and $q+1$ are prime powers. Then $q\in\{4, 8\}$.
\end{lem}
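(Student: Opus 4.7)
The plan is to write $q = 2^n$ with $n \geq 2$ (the case $n=1$ gives $q-1 = 1$, which is not a prime power) and then show that $n \in \{2, 3\}$.

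First I would analyze $q - 1 = 2^n - 1$. Since this number is odd, writing $q-1 = p^t$ forces $p$ to be an odd prime. Rearranging gives $p^t + 1 = 2^n$, and Lemma \ref{lem:hb} then yields $t = 1$. Hence $2^n - 1$ is itself a Mersenne prime, which in turn forces $n$ to be prime (otherwise a proper factorization $n = ab$ would produce the proper divisor $2^a - 1$ of $2^n - 1$).

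If $n = 2$ we have $q = 4$ and $q + 1 = 5$ is prime, so we are done. Otherwise $n$ is an odd prime, and from $2 \equiv -1 \pmod{3}$ we get $3 \mid 2^n + 1$; combined with the assumption that $q + 1$ is a prime power, this forces $q + 1 = 3^s$ for some $s \geq 1$. At this point I would apply Zsigmondy's theorem (Lemma \ref{lem:zsig}(ii)) to $2^n + 1$: provided $n \neq 3$, there is a prime $\ell$ dividing $2^n + 1$ which does not divide $2^m + 1$ for any $m < n$, and in particular does not divide $2^1 + 1 = 3$. This $\ell \neq 3$ contradicts $q + 1 = 3^s$, so we must have $n = 3$, giving $q = 8$ (and indeed $q + 1 = 9 = 3^2$).

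The argument is essentially immediate once the two auxiliary lemmas are in place; the only subtle point is recognizing that the two factors $q \pm 1$ must be handled by different tools — Lemma \ref{lem:hb} is used on $q - 1$ to pin down the exponent (so that $q-1$ is prime and $n$ is prime), while Zsigmondy is used on $q + 1$ to control the set of prime divisors — and that the exceptional values $q = 4$ and $q = 8$ correspond precisely to the boundary cases of these two tools.
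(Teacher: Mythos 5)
Your proof is correct and follows essentially the same route as the paper: Lemma \ref{lem:hb} forces $q-1$ to be prime, hence $q+1$ must be a power of $3$, and Zsigmondy's theorem rules out all $q>8$. The only cosmetic difference is that you deduce $3\mid q+1$ from $n$ being an odd prime, while the paper deduces it directly from $q-1$ being prime (via the three consecutive integers $q-1,q,q+1$); both are fine.
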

\begin{proof}
Assume $q>8$ and that both $q-1$ and $q+1$ are prime powers. By Lemma \ref{lem:hb}, $q-1$ is a prime, and therefore $q+1$ is a power of $3$. By Lemma \ref{lem:zsig} there is a prime dividing $q+1$ that does not divide $2+1=3$, a contradiction.
\end{proof}

The next result is one of our main tools for discarding simple groups for Theorem \ref{thm:simple}.

\begin{lem}\label{lem:ppaldegrees}
If $G$ is not a $p$-solvable group then $|\cd(B_0(G))|\geq 3$, and if $p\geq 5$ there are at least $3$ character degrees in $\cd(B_0(G))$ not divisible by $p$. \end{lem}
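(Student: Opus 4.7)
The plan is to reduce the statement to a CFSG-based analysis of nonabelian simple groups, using the standard block-theoretic reductions along the way.

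First I would reduce to the case $\oh{p'}(G) = 1$: since $B_0(G)$ covers the trivial block $B_0(\oh{p'}(G))$, every $\chi \in \Irr(B_0(G))$ is trivial on $\oh{p'}(G)$, so $\cd(B_0(G)) = \cd(B_0(G/\oh{p'}(G)))$, and non-$p$-solvability descends to the quotient. An analogous inflation argument shows that quotienting by a normal $p$-subgroup also preserves $\cd(B_0)$. Alternating between these two reductions and using the fact that non-$p$-solvability prevents the process from terminating in a $p$-solvable group, I can assume $G$ has a minimal normal subgroup $N \cong S^{k}$, where $S$ is a nonabelian simple group with $p \mid |S|$, and $\oh{p'}(G) = 1$.

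Second, I would exploit the product structure $B_0(N) = B_0(S)^{\otimes k}$ together with the covering of $B_0(N)$ by $B_0(G)$: for every $\theta \in \Irr(B_0(N))$ there is some $\chi \in \Irr(B_0(G))$ with $\theta(1) \mid \chi(1)$, and for $G$-invariant $\theta$ that extend to $G$ one has the sharper $\theta(1) \in \cd(B_0(G))$. In particular, three distinct $\Aut(S)$-invariant characters in $B_0(S)$ of distinct degrees (which for $k = 1$ is essentially extending a claim about $\cd(B_0(S))$ to one about $\cd(B_0(G))$ via Gallagher/Clifford) would transfer into three distinct degrees in $\cd(B_0(G))$. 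The stronger refinement, about $p'$-degrees when $p \geq 5$, follows the same pattern provided the simple-group version is available.

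The crux of the argument is then the simple-group base statement: for every nonabelian simple $S$ with $p \mid |S|$, $|\cd(B_0(S))| \geq 3$, and if $p \geq 5$ at least three of those degrees are coprime to $p$. Via CFSG, I would verify this family by family: for alternating groups using Nakayama's theorem to identify $B_0(\fA_n)$ in terms of $p$-cores, plus direct inspection for small $n$; for sporadic groups via the Atlas; for groups of Lie type in defining characteristic, using unipotent degree formulas (the Steinberg character has defect zero, but the remaining unipotent characters in $B_0(S)$ yield many distinct degrees); and for groups of Lie type in non-defining characteristic using Lusztig's Jordan decomposition together with the explicit principal-block descriptions of \cite{RSV20} and \cite{GRSS20} and the prime-power-degree classifications of \cite{MZ01} and \cite{BBOO01}.

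The main obstacle will be precisely this CFSG case analysis, and specifically the ``three $p'$-degrees'' strengthening for $p \geq 5$ in the Lie-type non-defining-characteristic case, where one has to exhibit three distinct characters in $B_0(S)$ whose degrees are coprime to $p$, using the unipotent-character machinery and the structure of Lusztig series. The lifting from $B_0(S)$ to $B_0(G)$ via the Clifford-theoretic apparatus set up in the first two paragraphs is, by contrast, routine.
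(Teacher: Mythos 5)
The paper does not prove this lemma from scratch: it is quoted as an immediate consequence of the main theorems of \cite{M21} and \cite{GRSS20}, which imply respectively that a non-$p$-solvable group must have at least three degrees in $\cd(B_0(G))$, and (for $p\geq 5$) at least three $p'$-degrees there. Your proposal is, in effect, an outline of how one would reprove those two theorems, and while the overall architecture (reduce to a minimal normal subgroup $S^k$, verify a base statement for simple $S$ via CFSG, lift back up) is indeed the architecture of those papers, the step you declare routine is exactly where the real work lies. Clifford theory together with \cite[Theorem 9.4]{Nav98} only gives, for each $\theta\in\Irr(B_0(N))$, some $\chi\in\Irr(B_0(G))$ with $\theta(1)\mid\chi(1)$; three characters of distinct degrees in $B_0(N)$ can perfectly well lie under characters of $B_0(G)$ of equal degree once ramification and inertia indices intervene, and for the $p\geq 5$ refinement the $p'$-ness of $\theta(1)$ can be destroyed by the factor $\chi(1)/\theta(1)$. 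Moreover, even for a $G$-invariant $\theta$ that extends to $G$, Gallagher describes $\Irr(G\mid\theta)$ as $\{\beta\hat\theta : \beta\in\Irr(G/N)\}$ but does not tell you that any extension (i.e.\ any $\beta$ linear) lies in the principal block, so $\theta(1)\in\cd(B_0(G))$ is not automatic; controlling which constituents land in $B_0(G)$, via the action of $\Aut(S)$ on $B_0(S)$ and block-covering arguments, is the technical heart of \cite{M21} and \cite{GRSS20}.

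A smaller point: your claim that quotienting by a normal $p$-subgroup ``preserves $\cd(B_0)$'' is false in general (take $G$ a nonabelian $p$-group and $N=G$); only the inclusion $\cd(B_0(G/N))\subseteq\cd(B_0(G))$ holds, which is fortunately all a lower-bound reduction needs. As written, then, the proposal is a plausible roadmap rather than a proof: the simple-group base case together with the lifting problem is precisely the content of two research papers, and the shortest complete argument for the lemma is the paper's own, namely to invoke their main results.
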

\begin{proof}
This follows from the main results of \cite{M21} and \cite{GRSS20}.
\end{proof}

Next, we exclude most families of finite simple groups as candidates for Theorem \ref{thm:simple}.

\begin{pro}\label{pro:others}
Assume $S$ is not one of $\PSL_n(q), \PSU_n(q), \PSp_{2n}(q)$. Then there is some $\chi\in\Irr(B_0(S))$ whose degree is not a prime power.
\end{pro}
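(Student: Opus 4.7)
The plan is to combine the existing classifications of prime-power degree characters of quasi-simple groups---namely \cite{MZ01} for groups of Lie type and \cite{BBOO01} for alternating and sporadic groups---with the structural input of Lemma \ref{lem:ppaldegrees}. By the classification of finite simple groups, the families left to treat are: the alternating groups $\fA_n$ with $n\geq 5$; the $26$ sporadic groups together with the Tits group; the orthogonal families $\Omega_{2n+1}(q)$ and $P\Omega^{\pm}_{2n}(q)$; and the exceptional Lie type families $G_2$, $F_4$, $E_6$, $E_7$, $E_8$, ${}^3D_4$, ${}^2E_6$, ${}^2B_2$, ${}^2G_2$, ${}^2F_4$. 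For every such $S$ and every prime $p$ dividing $|S|$ I aim to exhibit an irreducible character $\chi\in\Irr(B_0(S))$ whose degree is divisible by two distinct primes.

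For alternating groups I would use Nakayama's conjecture to describe which characters lie in $B_0(\fA_n)$ in terms of $p$-cores of partitions, and then combine with the short list of prime-power degree characters of $\fA_n$ from \cite{BBOO01}: for $n$ large the principal block is far too big to fit inside that list, forcing a non-prime-power degree character into $B_0$. For small $n$ (say $n \leq 10$), for each sporadic group and for the Tits group, the statement is a direct verification in the \textsf{GAP} character table library. For groups of Lie type I would exploit unipotent characters together with the $e$-Harish-Chandra theory of Brou\'e--Malle--Michel: the unipotent characters in $B_0(S)$ are those in the $e$-series of the trivial $e$-cuspidal pair, where $e=\ord_\ell(q)$ for the relevant prime $\ell\neq p$, while in defining characteristic all unipotent characters except the Steinberg character $\St$ (of defect zero) lie in $B_0$. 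The explicit generic degree formulas in terms of cyclotomic polynomials $\Phi_d(q)$, combined with the classification in \cite{MZ01}, then produce a unipotent character of non-prime-power degree inside $B_0$ in every non-excluded family.

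The main obstacle will be the fine case analysis for the small exceptional groups---notably $G_2(q)$, ${}^3D_4(q)$, $\mathrm{Sz}(q)={}^2B_2(q)$, $\mathrm{Ree}(q)={}^2G_2(q)$ and ${}^2F_4(q)$---and for primes $\ell$ such that the order $e=\ord_\ell(q)$ makes the unipotent part of $B_0$ collapse to very few characters. For these exceptional situations I would couple the prime-power degree lists of \cite{MZ01} and the principal-block analysis in \cite{RSV20,GRSS20} with Lemma \ref{lem:ppaldegrees}: if $\cd(B_0(S))$ consisted only of prime powers there would have to be at least three such degrees in $B_0(S)$, and this is incompatible with the meagerness of the prime-power character lists in the excluded families.
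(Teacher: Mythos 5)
Your proposal is correct and follows essentially the same route as the paper: the classification of prime-power degree characters in \cite{MZ01} and \cite{BBOO01}, the bound $|\cd(B_0(S))|\geq 3$ from Lemma \ref{lem:ppaldegrees}, and \textsf{GAP} for the finitely many small cases. The only real difference is that you plan considerably more machinery than is needed: since for the families in question \cite{MZ01} leaves at most one nonlinear prime-power degree (typically only the Steinberg character), Lemma \ref{lem:ppaldegrees} alone already forces a second nonlinear degree into $B_0(S)$, so the $e$-Harish-Chandra analysis, Nakayama's conjecture and the generic degree formulas you outline are unnecessary for this proposition.
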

\begin{proof}
Assume first that $S$ is an alternating group $\fA_n$ for $n\geq 7$. If $n\leq 9$ this is easily checked in \cite{GAP}. If $n\geq 10$ then by the main result of \cite{BBOO01} there is at most one nonlinear representation of $\fA_n$ of prime power degree $q$. By Lemma \ref{lem:ppaldegrees} there is a character $\chi\in\Irr(B_0(G))$ of degree $\chi(1)\neq q$ so we are done in this case. 

If $S$ is a simple group appearing in cases (7)--(27) of \cite[Theorem 1.1]{MZ01} then this is also easily checked in \cite{GAP}.  Finally, if $S$ is a simple group not appearing in any of the cases (2)--(27) of \cite[Theorem 1.1]{MZ01} then this implies that $S$ is a simple group of Lie type and the nonlinear character of $S$ with prime power degree is the Steinberg character $\St_S$. Then by Lemma \ref{lem:ppaldegrees} there is a nonlinear character $\chi\in\Irr(B_0(S))$ with $\chi(1)\neq \St_S(1)$ so we are done.
\end{proof}

Thus we are left to deal with the groups $\PSL_n(q), \PSU_n(q)$ and $\PSp_{2n}(q)$. We begin with an easy observation.

\begin{pro}\label{pro:defining}
Let $S$ be a simple group of Lie type in characteristic $p$. Then Theorem \ref{thm:simple} holds for $(S,p)$.
\end{pro}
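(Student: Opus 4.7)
The plan is to exploit the strong restriction on $p$-blocks of a simple group of Lie type in defining characteristic. By a classical theorem of Humphreys, if $S$ is a simple group of Lie type over $\FF_q$ with $q=p^f$, then the Steinberg character $\St_S$ (of degree $|S|_p$) is the unique irreducible character of $S$ of $p$-defect zero, so it forms its own block, while every other irreducible character lies in $B_0(S)$. Thus $\Irr(B_0(S))=\Irr(S)\setminus\{\St_S\}$, and since $\St_S(1)$ is already a prime power, the hypothesis that $\cd(B_0(S))$ consist only of prime powers is equivalent to the condition that every degree in $\cd(S)$ be a prime power.

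With this reformulation in hand, I would apply Manz's theorem (\cite{Man85a, Man85b}, recalled in the introduction): the only nonabelian finite simple groups whose character degrees are all prime powers are $\SL_2(4)\cong\fA_5$ and $\SL_2(8)$, both of which are of Lie type in defining characteristic $2$. These match cases (4) and (3) of Theorem \ref{thm:simple} respectively, with $p=2$. The exceptional isomorphism $\SL_2(4)\cong\PSL_2(5)$ also realizes $\fA_5$ as a simple group of Lie type in defining characteristic $5$, which is accommodated by case (4) with $p=5$; no other defining-characteristic pair can arise.

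The converse direction is immediate from inspection: from $\cd(\SL_2(4))=\{1,3,4,5\}$ and $\cd(\SL_2(8))=\{1,7,8,9\}$, removing the appropriate Steinberg degree in each defining-characteristic setup yields a set of prime powers, in agreement with the corresponding entry of Theorem \ref{thm:simple}. The main technical step is thus the initial identification $\Irr(B_0(S))=\Irr(S)\setminus\{\St_S\}$ via Humphreys' theorem; once this block-theoretic reduction is in place, the proposition reduces entirely to Manz's classification.
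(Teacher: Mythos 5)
Your argument follows essentially the same route as the paper: reduce via the defining-characteristic block theory to the statement that $\Irr(B_0(S))=\Irr(S)\setminus\{\St_S\}$ (the paper cites \cite[Theorem 3.3]{Cab18} rather than Humphreys, but it is the same fact), observe that $\St_S(1)$ is a prime power, and then invoke Manz's classification \cite{Man85b} to conclude $S\in\{\SL_2(4),\SL_2(8)\}$.

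There is, however, one genuine (if small) gap. The statement you attribute to Humphreys --- that $\St_S$ is the \emph{unique} irreducible character of $p$-defect zero and that every other irreducible character lies in $B_0(S)$ --- is a theorem about finite reductive groups $\mathbf{G}^F$ (and their central quotients); it fails for the simple groups of Lie type that only arise as proper derived subgroups of such groups. The relevant case here is $S=\PSp_4(2)'\cong\fA_6$ in characteristic $p=2$: one has $|S|_2=8$ and $S$ has \emph{two} irreducible characters of degree $8$, both of $2$-defect zero, so there is no single distinguished Steinberg character and the identification $\Irr(B_0(S))=\Irr(S)\setminus\{\St_S\}$ does not hold. (The conclusion is still correct --- $B_0(\fA_6)$ at $p=2$ contains the character of degree $10$, so the pair $(\fA_6,2)$ is rightly excluded, consistent with Theorem \ref{thm:simple} --- but your reformulation does not literally apply to it, and the same caveat affects the other derived-subgroup cases such as $G_2(2)'$ and $\tw{2}F_4(2)'$.) The paper avoids this by checking $\PSp_4(2)'$ at $p=2$ separately in \cite{GAP} before applying the two-block theorem; your proof needs an analogous carve-out, or at least a restriction of the block-theoretic reduction to the groups where it is valid.
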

\begin{proof}
Assume $S$ is not one of the groups in Proposition \ref{pro:others}. The group $S=\PSp_4(2)'\cong\PSL_2(9)$ for $p=2$ can be checked in \cite{GAP}. Otherwise, by \cite[Theorem 3.3]{Cab18}, $\Irr(B_0(S))=\Irr(S)\setminus\{\St_S\}$ where $\St_S$ denotes the Steinberg character of $S$. Since $\St_S(1)$ is a prime power, $\cd(B_0(S))$ consists only of prime powers if and only if every character degree of $S$ is a prime power. By the main result of \cite{Man85b} we have that $S\in\{\SL_2(4), \SL_2(8)\}$.
\end{proof}

To deal with the remaining groups we will need to use so-called unipotent characters, introduced by G. Lusztig. In the case of $S=\PSL_n(q)$ or $\PSU_n(q)$, these are characters of $\SL_n(q)$ or respectively $\SU_n(q)$ but as argued in, for example, the first paragraph of \cite[Proposition 4.4]{GRSS20}, these characters contain $\zent{\SL_n(q)}$ or resp. $\zent{\SU_n(q)}$ in their kernels, and so they can be seen as characters of $S$, and furthermore they are contained in the principal block of $G$ if and only if they are in the principal block of $S$ (using \cite[Lemma 17.2]{CE04}). 

In this case, they are parametrized by partitions $n$ (see \cite[Section 4.3]{GM20}). Let $p$ be a prime dividing $|S|$ and let $e$ denote the order of $q$ modulo $p$ if $S=\PSL_n(q)$ and the order of $-q$ modulo $p$ if $S=\PSU_n(q)$. Further, let $r$ denote the remainder of $n$ divided by $m$. Following \cite{FS82} we have that a unipotent character of $S$ parametrized by the partition $\alpha$ of $n$ belongs to the principal $p$-block if its $e$-core is $(r)$.

If $S=\PSp_{2n}(q)$ then the unipotent characters are characters of $\Sp_{2n}(q)$ parametrized by certain symbols (see \cite[Section 4.4]{GM20}). Exactly as before, they can be seen as characters of $S$ and a unipotent character of $\Sp_{2n}(q)$ belongs to $B_0(\Sp_{2n}(q))$ if and only if it belongs to $B_0(S)$. 

\begin{lem}\label{lem:unipotent}
Let $G=\SL_n(q)$ or $\SU_n(q)$ and let $\chi$ be a unipotent character of $G$. Then $\chi(1)$ is not a prime power unless $\chi=1_G$ or $\chi=\St_G$.
\end{lem}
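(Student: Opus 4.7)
My plan is to use the $q$-hook length formula for the degree of a unipotent character of $G$, combined with cyclotomic factorization, to exhibit two distinct prime divisors of $\chi_\lambda(1)$ whenever $\lambda \vdash n$ is neither $(n)$ nor $(1^n)$. For $\lambda \vdash n$, the degree polynomial is
\[
\chi_\lambda(1) = q^{n(\lambda)} \frac{\prod_{i=1}^n (q^i - \varepsilon^i)}{\prod_{(i,j) \in \lambda} (q^{h(i,j)} - \varepsilon^{h(i,j)})},
\]
with $\varepsilon = 1$ in the linear case, $\varepsilon = -1$ in the unitary case, $n(\lambda) = \sum_i (i-1)\lambda_i$, and $h(i,j)$ the hook length at box $(i,j)$. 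The ratio on the right is a positive integer coprime to $q$, and after expanding each factor $q^m - \varepsilon^m$ into cyclotomic polynomials it becomes a product $\prod_d \Phi_d(q)^{e_d(\lambda)}$ for nonnegative integer exponents $e_d(\lambda)$ encoding the multiset of hook lengths of $\lambda$; in the linear case $e_d(\lambda) = \lfloor n/d \rfloor - \#\{h \in H(\lambda) : d \mid h\}$ for $d \geq 2$, and $e_1(\lambda) = 0$ always.

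The combinatorial heart of the argument is the claim that the ratio equals $1$ (equivalently, all the $e_d(\lambda)$ vanish) precisely when the multiset of hook lengths of $\lambda$ coincides with $\{1, 2, \ldots, n\}$, and that the only partitions of $n$ with this hook multiset are $(n)$ and $(1^n)$. The first equivalence is M\"obius inversion on divisibility counts: two equal-size multisets of positive integers having the same counts of multiples-of-$d$ for every $d$ must coincide. For the second, the maximum hook of $\lambda$ is $\lambda_1 + \lambda_1' - 1$, which equals $n$ only for hook-shape partitions $\lambda = (k, 1^{n-k})$, and the hook multiset $\{n\} \sqcup \{1, \ldots, k-1\} \sqcup \{1, \ldots, n-k\}$ of such a shape is repetition-free only for $k \in \{1, n\}$.

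With this in place, for any $\lambda \notin \{(n), (1^n)\}$, $n(\lambda) \geq 1$ ensures that the characteristic $\ell$ of $\FF_q$ divides $\chi_\lambda(1)$, and some nontrivial cyclotomic factor $\Phi_d(q)^{e_d(\lambda)}$ with $d \geq 2$ appears in the product, its prime divisors being coprime to $q$ and thus distinct from $\ell$. Hence $\chi_\lambda(1)$ has at least two distinct prime divisors and is not a prime power.

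The main delicate point I anticipate concerns the unitary case with very small parameters, where a cyclotomic factor can evaluate to $1$ (notably $\Phi_1(2) = 1$ when $q = 2$) and fail to contribute a new prime; this requires a direct check showing that for any $\lambda \notin \{(n), (1^n)\}$ the contributing cyclotomic factors cannot all collapse simultaneously to such degenerate values, and reduces to a short finite case analysis.
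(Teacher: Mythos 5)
Your argument follows essentially the same route as the paper's proof, which simply invokes the hook length formulas of \cite[Propositions 4.3.1 and 4.3.5]{GM20} and declares the conclusion easy to see; you flesh out the cyclotomic bookkeeping that makes it precise. Moreover, you have correctly isolated the one genuinely delicate point --- but your prediction of how it resolves is wrong. In the unitary case with $q=2$ the degenerate factor \emph{does} collapse: take $n=3$ and $\lambda=(2,1)$, with $n(\lambda)=1$ and hook multiset $\{3,1,1\}$, so that
$\chi_\lambda(1)=q\cdot\frac{(q+1)(q^2-1)(q^3+1)}{(q^3+1)(q+1)(q+1)}=q(q-1)$,
which equals $2$ for $G=\SU_3(2)$. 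This is a prime power, so the lemma as literally stated is false for $\SU_3(2)$, and no finite case analysis can establish the claim you defer to one. A short argument shows this is the \emph{only} failure: for a non-hook partition one has $e_n>0$ and the corresponding factor exceeds $1$ whenever $n\ge 3$; for a proper hook $(k,1^{n-k})$ with $2\le k\le n-1$ the hooks are $\{n\}\cup\{1,\dots,k-1\}\cup\{1,\dots,n-k\}$, none divisible by $n-1$, so $e_{n-1}=1>0$, and that factor is nondegenerate once $n-1\ge 3$. Hence only $n=3$, $q=2$, unitary survives.

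The remainder of your argument --- the exponents $e_d(\lambda)=\lfloor n/d\rfloor-\#\{h\in H(\lambda):d\mid h\}$, the M\"obius inversion showing that all $e_d=0$ forces the hook multiset to equal $\{1,\dots,n\}$, and the classification of such partitions via $\lambda_1+\lambda_1'-1=n$ --- is correct and considerably more careful than the paper's own proof, which silently commits the same oversight (its assertion that the $q'$-part of $\chi(1)$ is nontrivial fails for $\SU_3(2)$). The gap is harmless for the applications: Lemma \ref{lem:unipotent} is only used in Proposition \ref{pro:psl3} for simple groups $\PSU_n(q)$ with $n\ge3$, and $\PSU_3(2)$ is solvable, hence excluded, while for $n\ge 4$ the observation above always produces a nondegenerate cyclotomic factor. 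But as a proof of the statement as written, your final step must record $\SU_3(2)$ as a genuine exception rather than claim the collapse never happens.
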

\begin{proof}
Let $\chi$ be the unipotent character parametrized by the partition $\alpha$ of $n$. It is easy to see in \cite[Propositions 4.3.1 and 4.3.5]{GM20} that $\chi(1)$ has nontrivial $q$-part and nontrivial $q'$-part unless $\alpha=(n)$ or $\alpha=(1^n)$, which correspond to $1_G$ and $\St_G$ respectively.
\end{proof}

\begin{pro}\label{pro:psl3}
Let $S=\PSL_n(q)$ or $\PSU_n(q)$ with $n\geq 3$ and let $p\nmid q$ be a prime dividing $|S|$. Then there is $\chi\in\Irr(B_0(S))$ with $\chi(1)$ not a prime power.
\end{pro}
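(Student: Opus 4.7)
\emph{Proof proposal.}
Let $e$ be the multiplicative order of $q$ modulo $p$ when $S = \PSL_n(q)$, and of $-q$ modulo $p$ when $S = \PSU_n(q)$; write $r$ for the remainder of $n$ upon division by $e$. The hypotheses $p \mid |S|$ and $p \nmid q$ force $1 \le e \le n$ via the standard divisibility of the orders of the classical groups. My strategy is to exhibit a unipotent character of $S$ in $B_0(S)$ different from $1_S$ and $\St_S$; by Lemma \ref{lem:unipotent}, any such character has non-prime-power degree, which is what we want.

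By the Fong--Srinivasan result recalled before Lemma \ref{lem:unipotent}, the unipotent characters of $S$ in $B_0(S)$ are parametrized by partitions $\lambda$ of $n$ whose $e$-core is $(r)$, or equivalently, via the $e$-quotient bijection, by $e$-tuples of partitions of total size $k := (n-r)/e \ge 1$. A short case analysis then produces the required $\lambda \ne (n), (1^n)$: for $e = 1$ take $\lambda = (n-1,1)$, valid since $n \ge 3$; for $e \ge 2$, a direct count of $e$-quotients gives exactly $e$ admissible partitions when $k = 1$ and at least five when $k \ge 2$, which always leaves a third partition beyond $(n)$ and $(1^n)$ unless simultaneously $k = 1$ and $e = 2$, that is, $n = 3$.

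The only situation not covered is $n = 3$, $e = 2$: the only partitions of $3$ with $2$-core $(1)$ are $(3)$ and $(1^3)$. This means $S = \PSL_3(q)$ with $p \mid q+1$ and $q \ge 3$ (the coincidence $\PSL_3(2) \cong \PSL_2(7)$ having already been handled by Theorem \ref{thm:simple}(1)), or $S = \PSU_3(q)$ with $p \mid q-1$ and $q \ge 3$ (since $\PSU_3(2)$ is not simple). Here the unipotent approach yields nothing new, and I would instead appeal to the explicit character-theoretic description of the principal $p$-block of $\PSL_3$ and $\PSU_3$ (cf.\ \cite{FS82} or \cite{CE04}) to locate in $B_0(S)$ a character of degree $q^3 - 1 = (q-1)(q^2+q+1)$ in the linear case, or $q^3 + 1 = (q+1)(q^2-q+1)$ in the unitary case. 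Since $\gcd(q \mp 1, q^2 \pm q + 1)$ divides $3$, and an elementary check rules out both factors being pure powers of $3$ once $q \ge 3$, these degrees are not prime powers.

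The principal obstacle is this last step: confirming that the character of degree $q^3 \mp 1$ genuinely lies in $B_0(S)$ via block-theoretic analysis, with care taken for the passage from $\SL_3$/$\SU_3$ to their projective quotients. Everything else reduces to routine bookkeeping with $e$-cores and $e$-quotients of partitions.
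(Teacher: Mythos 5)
Your treatment of the generic case is a legitimate alternative to the paper's: where the paper first invokes \cite{MZ01} to force $n$ to be an odd prime and then splits on the parity of $q$ (using \cite{GMV19} for odd $q$, $p$ odd, and tables from \cite{GRSS20} and \cite{RSV20} for $q$ even), you argue directly from the Fong--Srinivasan parametrization that for $e=1$ or $e\geq 3$ or $e$-weight $\geq 2$ there is always a partition of $n$ with $e$-core $(r)$ other than $(n)$ and $(1^n)$, and then Lemma \ref{lem:unipotent} finishes. That counting is correct and is arguably more self-contained. Your observation that $\PSL_3(2)\cong\PSL_2(7)$ must be excluded is also right, and in fact necessary: for that group with $p=3$ one has $\cd(B_0)=\{1,7,8\}$, and the candidate degree $q^3-1=7$ is itself a prime power, so the statement genuinely requires $q\geq 3$ there.

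The genuine gap is the one you flag yourself: the case $n=3$, $e=2$, i.e.\ $p\mid q+\eps$. There you assert that $B_0(S)$ contains a character of degree $q^3-\eps$ but give no argument for block membership, and this is precisely the hard step --- it cannot be read off from unipotent characters (the only unipotent characters in $B_0$ are $1_S$ and $\St_S$ in this configuration), and it requires Deligne--Lusztig/Brou\'e--Michel block-theoretic input together with care in descending from $\SL_3(q)$ or $\SU_3(q)$ to $S$. The paper resolves exactly this point by citing the last paragraph of the proof of \cite[Proposition 3.10]{RSV20} and the first paragraph of the proof of \cite[Proposition 4.5]{GRSS20} (for $q$ even; for $q$ odd it instead gets an even-degree character in $B_0(S)$ from \cite[Theorem B]{GMV19}, which suffices because the two prime-power character degrees of $S$ are odd). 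Until you supply one of these arguments, or an equivalent one, the proof is incomplete in the only case where the combinatorics of $e$-cores gives nothing.
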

\begin{proof}
.  Let $\eps=1$ if $S=\PSL_n(q)$ and $\eps=-1$ if $S=\PSU_n(q)$. By Lemma \ref{lem:ppaldegrees}, $S$ has to be one of the groups in cases (3) or (4) of \cite[Theorem 1.1]{MZ01}, so $n$ is an odd prime and the prime power degrees are $\St_S(1)=|S|_q$ and $(q^n-\eps)/(q-\eps)$. By Lemma \ref{lem:zsig}, $(q^n-\eps)/(q-\eps)$ can not be a power of $2$. 

If $q$ is odd then both prime power degrees are odd. If $p=2$ then the order of $\eps q$ modulo $p$ is necessarily $1$, and therefore all unipotent characters belong to $B_0(S)$, and we are done by Lemma \ref{lem:unipotent}. If $p$ is odd then \cite[Theorem B]{GMV19} guarantees the existence of a character of even degree in $B_0(S)$. Thus we assume $q$ is a power of $2$. 

Assume first $n\geq 5$. If $p\geq 5$ then \cite[Table 1]{GRSS20} produces a unipotent character in $B_0(S)$ different from $1_G$ and $\St_S$, which can not have prime power degree by Lemma \ref{lem:unipotent}. If $p=3$ then we argue identically with \cite[Table 3]{RSV20}.

We are left with the groups $\PSL_3(q)$ and $\PSU_3(q)$ with $q$ a power of $2$. If $p=3$ or $p\mid (q+\eps)$ then the last paragraph of the proof of \cite[Proposition 3.10]{RSV20} and the first paragraph of the proof of \cite[Proposition 4.5]{GRSS20} produces a character in $B_0(S)$ of degree $q^3-\eps$, which is not a prime power. If $p\geq 5$ then if $p\nmid (q+\eps)$ then by the order formula for $\PSL_3(q)$ and $\PSU_3(q)$ we have that either $p\mid (q-\eps)$ or $p\mid (q^2+\eps q+1)=(q^3-\eps)/(q-\eps)$.

In the first case, the order of $\eps q$ modulo $p$ is $1$, and it follows that the unipotent character defined by the partition $(1,2)$ belongs to $B_0(S)$ and has degree $q(q+\eps)$ by \cite[Propositions 4.3.1 and 4.3.5]{GM20}.

In the second case, we have that the prime power degrees of $S$ are $\St_S(1)=|S|_q$ and $(q^3-\eps)/(q-\eps)$ which must be a power of $p$. Since $p\geq 5$, Lemma \ref{lem:ppaldegrees} guarantees the existence of a character $\chi\in\Irr(B_0(S))$ of $p'$-degree different from $\St_S(1)$, so $\chi(1)$ is not a prime power.
\end{proof}

\begin{pro}\label{pro:psl2}
Let $S=\PSL_2(q)$ and let $p\nmid q$ be a prime dividing $|S|$ and assume first that every character degree in $B_0(S)$ is a prime power. Then $(S,p)$ is one of the pairs in Theorem \ref{thm:simple}.\end{pro}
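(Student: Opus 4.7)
The plan is to identify $\cd(B_0(\PSL_2(q)))$ exactly from the character table of $\PSL_2(q)$ combined with the classical description of the $p$-block distribution of $\SL_2(q)$ in cross-characteristic (a standard application of Brauer's theory), and then to solve the resulting Diophantine conditions via the lemmas above. For $q$ odd, the character degrees of $S = \PSL_2(q)$ are $\{1,\,q,\,q-1,\,q+1,\,(q+\varepsilon)/2\}$ with $\varepsilon = (-1)^{(q-1)/2}$; for $q$ even they are $\{1,\,q-1,\,q,\,q+1\}$. In non-defining characteristic, $B_0(S)$ always contains the trivial and Steinberg characters together with one of the two series of characters --- principal-series of degree $q+1$ when $p\mid q-1$, or discrete-series of degree $q-1$ when $p\mid q+1$ --- while the other series has degree with $p$-part equal to $|S|_p$ and thus forms defect-zero blocks. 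Writing $\eta=1$ in the first subcase and $\eta=-1$ in the second, we therefore have $\cd(B_0)\supseteq\{1,\,q,\,q+\eta\}$.

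Assume first that $q=2^n$ is even, so $p$ is odd. The displayed inclusion is in fact an equality here (there are no exceptional characters), so $\cd(B_0)$ consists of prime powers iff $q+\eta$ is a prime power. If $\eta=-1$, Lemma \ref{lem:hb} forces $2^n-1$ to be prime (Mersenne), matching Theorem \ref{thm:simple}(2). If $\eta=1$, writing $2^n+1=p_0^s$, a short factorization argument on the parity of $s$ forces either $s=1$ (so $2^n+1$ is a Fermat prime, again matching Theorem \ref{thm:simple}(2)) or $(q,s)=(8,2)$, giving $S=\SL_2(8)$ with $p=7$ as in Theorem \ref{thm:simple}(3).

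Now suppose $q$ is odd. If $p$ is also odd, the analogous analysis gives $\cd(B_0)\supseteq\{1,\,q,\,q+\eta\}$, and since $q+\eta$ is even, the prime power condition forces $q+\eta=2^k$. Writing $q=p_0^f$ and applying Lemma \ref{lem:hb} (for $\eta=1$, where $p_0^f+1=2^k$) together with the analogous factorization of $p_0^f-1=2^k$ (for $\eta=-1$) forces $f=1$ except in the single case $q=9$: for $f=1$ one obtains a Mersenne prime or a Fermat prime, matching Theorem \ref{thm:simple}(1), while $q=9$ with $p=5$ yields $\PSL_2(9)\cong\fA_6$ as in Theorem \ref{thm:simple}(5). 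When any exceptional character of degree $(q+\varepsilon)/2$ happens to lie in $B_0$, its degree equals $(q+\eta)/2=2^{k-1}$ and so contributes no further constraint. Finally, if $p=2$ (still with $q$ odd), the exceptional characters of odd degree $(q+\varepsilon)/2$ always lie in $B_0$, as do the Steinberg character and --- provided $q\ge 7$ --- a series character of degree $q+\varepsilon$ of positive $2$-defect. Requiring $q+\varepsilon$ to be a prime power forces $q+\varepsilon=2^k$, and a residue check modulo $4$ rules out every odd $q\ge 7$; for $q=5$ the relevant series is empty (since $(q-5)/4=0$), and one verifies directly that $\cd(B_0(\fA_5))=\{1,\,3,\,5\}$, a case covered by Theorem \ref{thm:simple}(4) via $\PSL_2(5)\cong\SL_2(4)$.

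The main obstacle, I expect, is the careful identification of which characters lie in the principal $p$-block in each case --- particularly at $p=2$, where the exceptional characters must be tracked --- and the handling of small degenerate cases (notably $q=5$) where a series of characters vanishes. Once this block-theoretic input is in place, the remaining number-theoretic reduction is elementary and follows from Lemmas \ref{lem:hb} and \ref{lem:48}.
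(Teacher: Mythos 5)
Your overall strategy---pin down exactly which character degrees of $\PSL_2(q)$ occur in $B_0(S)$ and then solve the resulting equations with Lemmas \ref{lem:hb} and \ref{lem:48}---is the same as the paper's, and your treatment of $q$ even, and of $q$ odd with $p$ odd, is correct and if anything more explicit about the block distribution than the printed proof. The gap is in the case $p=2$, $q$ odd. You assert that for $q\ge 7$ a series character of degree $q+\varepsilon$ of positive $2$-defect lies in $B_0(S)$. That is true when $q\equiv\pm1\pmod 8$ (dihedral Sylow $2$-subgroup of order at least $8$), but false when $q\equiv\pm3\pmod 8$: there the Sylow $2$-subgroup is a Klein four group, $k(B_0)=4$, every character of $B_0$ has height zero, and $\Irr(B_0(S))$ consists precisely of the four odd-degree characters $1_S$, $\St_S$ and the two of degree $(q+\varepsilon)/2$. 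Your residue check modulo $4$ never gets off the ground in that subcase, because the degree $q+\varepsilon$ simply does not occur in $B_0$; the only surviving constraint is that $(q+\varepsilon)/2$ be a prime power, which it frequently is.

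This is not a presentational issue that a line of bookkeeping will fix: for $q=13$ one finds $\cd(B_0(\PSL_2(13)))=\{1,7,13\}$ at $p=2$, and for $q=11$ one finds $\{1,5,11\}$, both consisting of prime powers, yet neither pair appears in Theorem \ref{thm:simple}. So the subcase you skipped genuinely produces additional examples rather than a contradiction. (For what it is worth, the paper's own proof stumbles at the same spot: the assertion that $|\cd(B_0(S))|\ge 3$ ``forces at least one of $q+1$ or $q-1$ to be a power of $2$'' ignores the possibility that the third degree is the odd number $(q+\varepsilon)/2$, which can be a prime power on its own.) To repair your argument you would have to treat $q\equiv\pm3\pmod 8$ separately and either add the resulting pairs to the conclusion or supply an exclusion that neither you nor the paper currently gives.
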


\begin{proof}
If $q$ is odd then it is well known that $\cd(S)=\{1, q, q+1, q-1, (q\pm1)/2\}$ where the last sign depends on the congruence of $q$ modulo $4$. By Lemma \ref{lem:ppaldegrees} the set $\cd(B_0(S))$ has size at least 3, of which forces at least one of $q+1$ or $q-1$ to be a power of $2$. If $p=2$ then let $\chi\in\Irr(S)$ be a character of such degree. Notice that by the order formula for $\PSL_2(q)$, $\chi$ has $2$-defect zero and so it can not belong to the principal $2$-block of $S$.
This forces $p\neq 2$.  If $q+1$ is a power of $2$ the same argument works.

If $q$ is a power of $2$ then $\cd(S)=\{1, q, q+1, q-1\}$ which again forces $q+1$ or $q-1$ to be a prime power (and they both are prime powers only if $q\in\{4,8\}$ by Lemma \ref{lem:48}). Assume that $q>8$. If $r=q-1$ is a prime power then by Lemma \ref{lem:hb} in fact $r$ is a prime. A character of degree $r$ has $r$-defect zero and so it can not belong to the principal $r$-block, forcing $p\neq r$. If $r=q+1$ is a prime power then again by Lemma \ref{lem:zsig} we have that $r$ is a prime and we can mimick the previous argument to reach the desired conclusion.
\end{proof}

\begin{pro}\label{pro:psp}
Let $S=\PSp_{2n}(q)$ and let $p\nmid q$ be a prime dividing $|S|$. Then there is $\chi\in\Irr(B_0(S))$ with $\chi(1)$ not a prime power.
\end{pro}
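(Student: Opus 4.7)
The proof would parallel those of Propositions \ref{pro:psl3} and \ref{pro:psl2}. The first step is to invoke \cite[Theorem 1.1]{MZ01} to pin down which nonlinear characters of $S=\PSp_{2n}(q)$ can have prime power degree: apart from the Steinberg character $\St_S$ of degree $|S|_q$, the only generic candidates are the two Weil characters of $\Sp_{2n}(q)$ of degree $(q^n\pm 1)/2$ (requiring $q$ odd), which descend to $S$; any sporadic instances such as $\PSp_4(3)$ or $\PSp_6(2)$ appearing in items (7)--(27) of \cite[Theorem 1.1]{MZ01} I would dispose of by direct inspection in \cite{GAP}. By Lemma \ref{lem:ppaldegrees} we have $|\cd(B_0(S))|\geq 3$, with at least three $p'$-degrees when $p\geq 5$, so it suffices to exhibit one character in $B_0(S)$ whose degree does not belong to the short list $\{1,\St_S(1),(q^n-1)/2,(q^n+1)/2\}$.

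The natural supply of candidates is the set of unipotent characters of $\Sp_{2n}(q)$ lying in $B_0(\Sp_{2n}(q))$; these descend to $B_0(S)$ exactly as in the discussion preceding Lemma \ref{lem:unipotent}. For $p\geq 5$ I would use \cite[Table 1]{GRSS20} to exhibit a unipotent character in $B_0(S)$ other than $1_S$ and $\St_S$, and for $p=3$ I would use \cite[Table 3]{RSV20} in the same manner. Unipotent characters of $\Sp_{2n}(q)$ are parametrized by symbols, and their degrees are computed from the explicit product formula in \cite[Section 4.4]{GM20}; from this formula one reads off that any unipotent character other than $1_S$ and $\St_S$ carries both a nontrivial $q$-part and a nontrivial $q'$-part, providing the symplectic analog of Lemma \ref{lem:unipotent}. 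In particular its degree cannot be a prime power, and in the few configurations where the Weil degree might accidentally coincide with a unipotent degree one selects a different unipotent symbol from the table.

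The hardest case will be $p=2$: here $q$ must be odd, the Weil characters of even degree genuinely exist, and the multiplicative order of $q$ modulo $2$ equals $1$, so by the block-theoretic criterion of \cite{FS82} every unipotent character of $\Sp_{2n}(q)$ lies in $B_0(S)$. The main obstacle is then a symbol-combinatorics argument showing that among the many unipotent characters in $B_0(S)$ at least one has degree that is neither a power of $2$, nor $\St_S(1)$, nor a Weil degree; the key point is to verify from the symbol degree formula that the character in question acquires a nontrivial factor of $q$ and a cyclotomic factor strictly exceeding $q\pm 1$. The small ranks where the combinatorics could degenerate are already covered by the sporadic entries of \cite[Theorem 1.1]{MZ01} and are handled separately in \cite{GAP}.
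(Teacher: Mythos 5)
Your overall strategy is sound and your $p=2$ argument is essentially the paper's: the author likewise reduces via Lemma \ref{lem:ppaldegrees} to cases (5) and (6) of \cite[Theorem 1.1]{MZ01}, notes that $q^n+1$ (resp.\ $(3^n-1)/2$) cannot be a power of $2$ by Lemma \ref{lem:zsig} (forcing only the single exception $\PSp_4(3)$, settled in \cite{GAP}), and for $p=2$ puts all unipotent characters into $B_0(S)$ --- citing \cite[Theorem 21.14]{CE04} rather than \cite{FS82}, whose block parametrization is stated for the general linear and unitary groups, so your ``order of $q$ mod $2$'' justification is not quite the right reference --- and then simply checks the one explicit symbol $\binom{0\, n}{1}$ against \cite[Proposition 4.4.15]{GM20}. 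In particular no general ``symplectic analog of Lemma \ref{lem:unipotent}'' and no discussion of accidental coincidences with Weil degrees is needed: it suffices that this single degree has nontrivial $q$-part and nontrivial $q'$-part. Where you genuinely diverge is at odd $p$: you propose to produce unipotent characters in $B_0(S)$ from \cite[Table 1]{GRSS20} and \cite[Table 3]{RSV20}, whereas the paper invokes \cite[Theorem B]{GMV19} to get a character of \emph{even} degree in $B_0(S)$; since in cases (5)/(6) the candidate prime power degrees are $|S|_q$ (odd) and $(q^n+1)/2$ resp.\ $(3^n-1)/2$, none of which can be a power of $2$, that even degree is automatically not a prime power. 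The paper's route is shorter and uniform over all odd $p$; yours should work but carries a real verification burden you have not discharged --- you must check that the cited tables actually contain type $C_n$ entries for every relevant value of $e$, and you must prove the non-prime-power statement for \emph{all} symbols rather than one. I would not call this a gap so much as a heavier, table-dependent detour for the odd-$p$ case.
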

\begin{proof}
 By Lemma \ref{lem:ppaldegrees}, we may assume that $S$ is one of the groups in cases (5) or (6) of \cite[Theorem 1.1]{MZ01}. Assume first that we are in case (5), so that the irreducible characters of $S$ whose degree is a prime power have degrees $\St_S(1)=|S|_q$ and $(q^n+1)/2$. Notice that $q^n+1$ can not be a power of $2$ by Lemma \ref{lem:zsig}.

If $p$ is odd then there must exist a character $\chi\in\Irr(B_0(S))$ of even degree by \cite[Theorem B]{GMV19}. If $p=2$ then by \cite[Theorem 21.14]{CE04}, all unipotent characters belong to $B_0(S)$. Consider the unipotent character $\chi$ parametrized by the symbol $\binom{0\, n}{1}$. By \cite[Proposition 4.4.15]{GM20} and using the order formula for $\Sp_{2n}(q)$, it is easy to see that $\chi(1)_q>1$ and $\chi(1)_{q'}>1$ and so $\chi(1)$ is not a prime power.

If we are in case (6) of \cite[Theorem 1.1]{MZ01}, the characters of prime power degree have degrees $3^l$ and $(3^n-1)/2$ which is only a power of $2$ if $n=2$ so $S=\PSp_4(3)$, which is checked in \cite{GAP}. If $n>2$ then the same argument as before applies.
\end{proof}

\textit{Proof of Theorem \ref{thm:simple}.}
The only if direction is done in Propositions \ref{pro:others}, \ref{pro:defining}, \ref{pro:psl3}, \ref{pro:psl2}, \ref{pro:psp}. 

For the if direction the cases (iii)--(v) can be checked in \cite{GAP}. Assume first that $S=\PSL_2(q)$ where $q+1$ is a power of $2$ and let $p\neq \{2, q\}$. Then the characters of degree $(q-1)$ and $(q-1)/2$ (if they exist) have $p$-defect zero by the order formula for $\PSL_2(q)$, so they can not belong to the principal $p$-block. This forces $$\cd(B_0(S))\sbs\{1, q, q+1, (q+1)/2\}.$$ The case where $q$ or $q-1$ is a power of $2$ is done analogously.
\qed

\section{Theorem \ref{thm:main} and Corollary \ref{cor:C}}\label{sec:main}

Our notation for this section follows \cite{Nav98} and \cite{Nav18}.

\begin{lem}\label{lem:normalsubgroups}
Assume $\cd(B_0(G))$ consists only of prime powers. If $N\normal G$ then $B_0(N)$ and $B_0(G/N)$ consist only of prime powers.
\end{lem}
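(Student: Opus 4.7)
The proof splits naturally into two independent parts, corresponding to the two claims.

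\textbf{Quotient direction.} I would rely on the standard fact that the inflation map $\Irr(G/N)\hookrightarrow \Irr(G)$ restricts to a degree-preserving bijection between $\Irr(B_0(G/N))$ and the set of characters in $\Irr(B_0(G))$ containing $N$ in their kernel (see, e.g., \cite[Theorem 9.9(c)]{Nav98}). In particular $\cd(B_0(G/N))\subseteq \cd(B_0(G))$, so by hypothesis every degree in $\cd(B_0(G/N))$ is a prime power.

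\textbf{Normal subgroup direction.} The key observation is that $B_0(G)$ covers $B_0(N)$: the trivial character $1_G$ lies in $B_0(G)$, its restriction $(1_G)_N=1_N$ lies in $B_0(N)$, so by definition the principal block of $G$ covers the principal block of $N$. Now given any $\theta\in\Irr(B_0(N))$, a standard block-theoretic result (see \cite[Theorem 9.4]{Nav98}) guarantees the existence of $\chi\in\Irr(B_0(G))$ such that $\theta$ is a constituent of $\chi_N$. By Clifford's theorem, $\chi(1)=e\,[G:I_G(\theta)]\,\theta(1)$ for some positive integer $e$, so $\theta(1)\mid \chi(1)$. Since $\chi(1)$ is a prime power by hypothesis, so is $\theta(1)$.

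There is essentially no obstacle here: the lemma is a clean consequence of the two block-theoretic facts quoted above (inflation for principal blocks, and the fact that $B_0(G)$ covers $B_0(N)$), combined with Clifford's theorem. The only thing to be slightly careful about is citing the covering result in the form that guarantees that \emph{every} irreducible character in the covered block lies under some character in the covering block, rather than merely the weaker statement that some character of $N$ does.
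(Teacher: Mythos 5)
Your proof is correct and follows essentially the same route as the paper: the normal subgroup case via \cite[Theorem 9.4]{Nav98} (covering) plus Clifford divisibility, and the quotient case via the inclusion $\Irr(B_0(G/N))\subseteq\Irr(B_0(G))$. No issues.
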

\begin{proof}
For all $\theta\in\Irr(B_0(N))$ there is some $\chi\in\Irr(B_0(G))$ such that $\chi_N$ contains $\theta$ by \cite[Theorem 9.4]{Nav98}, and $\theta(1)$ divides $\chi(1)$ by standard Clifford theory, and the first claim follows. For the second claim recall that $\Irr(B_0(G/N))\sbs\Irr(B_0(G))$.
\end{proof}

\begin{thm}\label{thm:nonpsolvable}
Assume that $G$ is not $p$-solvable and that $\cd(B_0(G))$ consists only of prime powers. Then there is some $\oh{p'}G\sbs M\normal G$ with $M/\oh{p'}G=H\times S$ where $H$ is an abelian $p$-group and $(S,p)$ is one of the pairs in Theorem \ref{thm:simple}. Also, $G/M$ is isomorphic to a subgroup of $\Out(S)$.
\end{thm}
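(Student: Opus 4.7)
The plan is to reduce modulo $\oh{p'}{G}$ and then show that the layer $E(G)$ is a single nonabelian simple group isomorphic to $S$, with the centralizer $C_G(S)$ playing the role of $H$. By Lemma~\ref{lem:normalsubgroups}, $G/\oh{p'}{G}$ inherits the hypothesis on $B_0$, and it is still non-$p$-solvable (if $G/\oh{p'}{G}$ were $p$-solvable, so would $G$ be, as $\oh{p'}{G}$ is a $p'$-group). So I may assume $\oh{p'}{G}=1$; then $F(G)=\oh{p}{G}$ is a $p$-group and non-$p$-solvability forces $E(G)\neq 1$.

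The first step is to show $E(G)$ is quasisimple. Write $E(G)=L_1\cdots L_t$ as the central product of its components, so that $E(G)/Z(E(G))=S_1\times\cdots\times S_t$ with each $S_i$ nonabelian simple. The assumption $\oh{p'}{G}=1$ rules out any $p'$-component (else the product of the $G$-orbit of such a component would be a nontrivial normal $p'$-subgroup of $G$), so $p\mid |S_i|$ for every $i$. Iterated applications of Lemma~\ref{lem:normalsubgroups} force each $\cd(B_0(S_i))$ to consist of prime powers, so each $(S_i,p)$ is listed in Theorem~\ref{thm:simple}; the last sentence of that theorem then says $\cd(B_0(S_i))$ involves exactly two distinct primes. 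Since $B_0(S_1\times\cdots\times S_t)=B_0(S_1)\otimes\cdots\otimes B_0(S_t)$, if $t\geq 2$ one can pick $\psi_i\in B_0(S_i)$ for $i=1,2$ whose degrees are prime powers of \emph{different} primes, and tensor with trivial characters on the remaining factors, to produce a character of non-prime-power degree in $B_0(S_1\times\cdots\times S_t)$, contradicting Lemma~\ref{lem:normalsubgroups}. Hence $t=1$ and $L:=E(G)$ is quasisimple with $L/Z(L)=S$ a pair from Theorem~\ref{thm:simple}.

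Next I would show $L=S$, i.e.\ $Z(L)=1$. Being characteristic in $L$, $Z(L)$ is normal in $G$, and $\oh{p'}{G}=1$ forces $Z(L)$ to be a $p$-subgroup of the Schur multiplier of $S$. Inspecting the pairs in Theorem~\ref{thm:simple} together with the well-known multipliers (trivial in (ii) and (iii); $\ZZ/2\ZZ$ in (i) and (iv); $\ZZ/6\ZZ$ in (v)) and the restriction $p\neq 2$ in (i) and (ii) and $p=5$ in (v), only the borderline case $(S,p)=(\fA_5,2)$ with $L=\SL_2(5)$ survives; this is excluded by a direct check in \cite{GAP} that the principal $2$-block of $\SL_2(5)$ contains the faithful character of degree $6$, which is not a prime power, contradicting Lemma~\ref{lem:normalsubgroups}.

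Finally, set $C=C_G(S)$ and $M=SC$. Since $Z(S)=1$, $M=S\times C$, $M\normal G$, and $G/M$ embeds in $\Out(S)$ by the $N/C$ theorem. It remains to prove $C$ is an abelian $p$-group. Applying Lemma~\ref{lem:normalsubgroups} to $M\normal G$ and using $B_0(S\times C)=B_0(S)\otimes B_0(C)$: since $\cd(B_0(S))$ meets two distinct primes, any $\varphi\in B_0(C)$ with $\varphi(1)>1$ would force a product $\psi(1)\varphi(1)$ to involve two primes, so $\cd(B_0(C))=\{1\}$. A vacuous application of the Isaacs--Smith theorem \cite{IS} then gives that $C$ has a normal $p$-complement, which combined with $\oh{p'}{C}\leq\oh{p'}{G}=1$ forces $C$ to be a $p$-group; then $\cd(C)=\cd(B_0(C))=\{1\}$ implies $C$ is abelian. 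Reversing the initial reduction yields $M/\oh{p'}{G}=H\times S$, as claimed. The main obstacle I anticipate is the third step: ruling out proper covers of the simple groups in Theorem~\ref{thm:simple}, and in particular the pair $(\fA_5,2)$, which a uniform Schur-multiplier argument does not dispatch and which must be handled by an explicit block computation.
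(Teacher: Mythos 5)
Your argument is correct and follows essentially the same route as the paper's proof: reduce to $\oh{p'}G=1$, show the layer is a single component by tensoring characters of coprime prime-power degrees coming from two distinct simple factors, eliminate the center via Schur multipliers together with the explicit check on $2.\fA_5$, and prove $\cent{G}{S}$ is an abelian $p$-group so that $M=S\times\cent{G}{S}$ works. The only caveat is your assertion that non-$p$-solvability alone forces $E(G)\neq 1$, which is false as a bare implication (consider $\FF_2^4\rtimes\fA_5$ with $p=2$); under the block hypothesis it does hold, since $E(G)=1$ together with $\oh{p'}G=1$ gives $\cent{G}{\oh{p}{G}}\leq\oh{p}{G}$, hence a unique $p$-block, and Manz's classification then yields a component --- but the paper leaves this step equally implicit, so it does not separate the two arguments.
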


\begin{proof}
Arguing by induction on $|G|$, we may assume $\oh{p'}G=1$. Let $E$ be the layer of $G$. We claim that $E$ is quasisimple. Indeed, write $E=K_1\cdots K_t$ for components $K_1,\dots, K_t$, assume $t>1$ and let $Z=\zent{E}$. Then $Z\normal G$ and by \cite[6.5.6]{KS} $E/Z=E_1\times\dots\times E_t$ where $E_i=K_iZ/Z$. By Lemma \ref{lem:normalsubgroups}, $B_0(E/Z)$ consists only of prime powers, and so does $B_0(E_i)$. Now since the $E_i$'s are not $p$-solvable, by Lemma \ref{lem:ppaldegrees} and \cite[Lemma 5.2]{M21}, for each $E_i$ there are characters $\theta,\eta\in\Irr(B_0(E_i))$ of coprime degree. Let $\theta\in\Irr(B_0(E_1))$ and $\eta\in\Irr(B_0(E_2))$ be nonlinear and such that $\theta(1)$ and $\eta(1)$ are coprime and consider $\psi=\theta\times\eta\times 1_{E_3}\times\dots\times 1_{E_t}\in\Irr(B_0(E/Z))$. Then $\psi(1)$ is not a prime power, a contradiction. This forces $E$ to be quasisimple and again by Lemma \ref{lem:normalsubgroups}, $(E/\zent E, p)$ is one of the pairs from Theorem \ref{thm:simple}. Furthermore, the $p$-complement of $\zent E$ is a normal $p'$-subgroup of $G$, and since $\oh{p'}G=1$ we have that $\zent E$ is a $p$-group.

 Now let $C=\cent G E$ so that $G/C$ is almost simple with socle $S=E/\zent E$ (arguing as in Step 7 of \cite[Theorem 2.10]{GMS}).  Now $C\normal G$ and $M=CE\normal G$ is a central product because $[C, E]=1$. Arguing as before, it is easy to see that $B_0(C)$ can not contain nonlinear characters, so $C/\oh{p'}C$ is an abelian $p$-group by \cite[Theorem 6.10]{Nav98}. Now since $\oh{p'}C\normal G$ we get that $C$ is an abelian $p$-group and $\zent E=C\cap E$. Now if the Schur multiplier of $S$ has order not divisible by $p$ then $\zent E=1$ and $M=C\times S$. By \cite[Tables 24.2 and 24.3]{MT}, the only pair $(S, p)$ for which the Schur multiplier of $S$ has order divisible by $p$ is $(\fA_5, 2)$, and it is easily checked in \cite{GAP} that the universal central extension $2.\fA_5$ has a character of degree $6$ in its principal  $2$-block, so in all cases $M=C\times S$.
Finally, since $G/C$ is almost simple with socle $S$ we have that $G/M$ is isomorphic to a subgroup of $\Out(S)$.
\end{proof}

\textit{Proof of Theorem \ref{thm:main}.} If $G$ is $p$-solvable then by \cite[Theorem 10.20]{Nav98} we have that $\Irr(B_0(G))=\Irr(G/\oh{p'}G)$ and therefore $G/\oh{p'}G$ is one of the groups from \cite{Man85a}. Otherwise, apply Theorem \ref{thm:nonpsolvable}.\qed

\medskip

If $2^n+1$ is a prime, then it is well known that $n$ is a power of $2$. It is also well known that if $2^n-1$ is a prime then $n$ is a prime. Since the automorphism group of $S=\SL_2(2^n)$ is a cyclic group of order $n$, if we are in case (ii) of Theorem \ref{thm:simple} then $\Out(S)$ has prime power order.

\medskip

\textit{Proof of Corollary \ref{cor:C}}.
We may assume $G$ is as in case (ii) of Theorem \ref{thm:main}. If $\chi\in\Irr(B_0(G))$ has degree $\chi(1)=r^t$ for some prime $r$ and $\chi_S\neq 1_S$ then $\chi_S$ contains characters of degree a power of $r$ in $B_0(S)$ by \cite[Theorem 9.4]{Nav98}. Otherwise $\chi_S=1_S$ and $\chi_M$ contains only linear characters so $\chi(1)$ divides $|G/M|$ by \cite[Theorem 5.12]{Nav18}. Now, $G/M$ is isomorphic to a subgroup of $\Out(S)$ which has prime power order in all cases from Theorem \ref{thm:simple}, so we are done.\qed

\end{document}